\documentclass[a4paper,leqno]{amsart}

\usepackage{amsmath}
\usepackage{amsthm}
\usepackage{amssymb}
\usepackage{amsfonts}
\usepackage[applemac]{inputenc}
\usepackage{mathrsfs}
\usepackage{pdfsync}
\usepackage{graphicx}
\usepackage{mathabx}
\usepackage{color}


\def\R{{\mathbb R}}
\def\N{{\mathbb N}}
\def\Z{{\mathbb Z}}

\def\le{\leqslant}
\def\ge{\geqslant}

\theoremstyle{plain}
\newtheorem{theorem}{Theorem}[section]
\newtheorem{lemma}[theorem]{Lemma}

\newtheorem{proposition}[theorem]{Proposition}
\newtheorem{hyp}{Assumption}[section]

\theoremstyle{definition}

\newtheorem{remark}[theorem]{Remark}
\newtheorem*{remark*}{Remark}

\numberwithin{equation}{section}


\begin{document}

\title[Nonlinear bound states with prescribed angular momentum]
{Nonlinear bound states with prescribed angular momentum}
\author[I. Nenciu, X. Shen, C. Sparber]{Irina Nenciu, Xiaoan Shen, Christof Sparber}

\address[I.~Nenciu]
{Department of Mathematics, Statistics, and Computer Science, M/C 249, University of Illinois at Chicago, 851 S. Morgan Street, Chicago, IL 60607, USA \textit{and} Institute of Mathematics ``Simion Stoilow''
     of the Romanian Academy\\ 21, Calea Grivi\c tei\\010702-Bucharest, Sector 1\\Romania}
\email{nenciu@uic.edu}

\address[X. Shen]
{Department of Mathematics, Statistics, and Computer Science, M/C 249, University of Illinois at Chicago, 851 S. Morgan Street, Chicago, IL 60607, USA}
\email{xshen30@uic.edu}

\address[C.~Sparber]
{Department of Mathematics, Statistics, and Computer Science, M/C 249, University of Illinois at Chicago, 851 S. Morgan Street, Chicago, IL 60607, USA}
\email{sparber@uic.edu}

\begin{abstract}
We prove the existence of a class of orbitally stable bound state solutions to nonlinear Schr\"odinger equations with super-quadratic confinement in two and three spatial dimensions. These solutions are given by 
time-dependent rotations of a non-radially symmetric spatial profile which in itself is obtained via a doubly constrained energy minimization. One of the two constraints imposed is the total mass, 
while the other is given by the expectation value of the angular momentum around the $z$-axis. Our approach also allows for a new description of the set of 
minimizers subject to only a single mass constraint.
\end{abstract}

\date{\today}

\subjclass[2000]{35Q41, 35B35, 35B07}
\keywords{Nonlinear Schr\"odinger equation, angular momentum, constrained energy minimizer}

\thanks{This publication is supported by the MPS Simons foundation through awards no. 851720 and no. 709025}
\maketitle


\section{Introduction}
\label{sec:intro}

We consider, for $(t,x) \in \R \times \R^d$, with $d=2$, or $d=3$, the nonlinear Schr\"odinger equation (NLS)
\begin{equation}\label{nls}
	i \partial_{t} u = H u  + \lambda |u|^{2\sigma}u, \quad u_{\vert{t=0}} = u_{0},
\end{equation}
where $\lambda \in \R$, $\sigma>0$, and
\begin{equation*}\label{ham}
H = -\frac{1}{2} \Delta +V(x),
\end{equation*}
the linear part of the Hamiltonian. 
Here, $V$ is a smooth confining potential, which is assumed to grow super-quadratically at infinity. More precisely, we impose:
\begin{hyp}\label{ass1} 
The potential $V\in C^\infty(\R^d;\R)$ is assumed to be radially symmetric and confining, i.e. $V(x) \to +\infty$ as $|x|\to \infty$. Moreover, there exists $k>2$ and $R>0$, 
such that for $|x|>R$:
\[
c_\alpha \langle x \rangle ^{k-|\alpha|} \le |\partial^\alpha V(x)| \le C_\alpha \langle x \rangle ^{k-|\alpha|}, \quad \forall \, \alpha \in \N^d,
\]
where $ \langle x \rangle=(1+|x|^2)^{1/2}$ and $c_\alpha, C_\alpha$ are positive constants.
\end{hyp}

For simplicity, we assume that 
\[
V(x)\ge 0\quad \text{ for all }x\in \R^d\,.
\] 
Indeed, Assumption \ref{ass1} implies that $V$ is bounded below and hence $V\ge 0$ can always be achieved by a simple gauge transform.
\begin{remark} A typical example for an admissible potential is given by $V(x) = |x| ^{k}$, with $k>2$. The limiting case of a quadratic potential where $k=2$ is 
excluded for reasons which will become clear below.
\end{remark}
Assumption \ref{ass1} implies that the operator $H$ defined on $C_0^\infty(\R^d)$ is essentially self-adjoint 
on $L^2(\R^d)$, giving rise to the linear Schr\"odinger group $\big(e^{-i tH}\big)_{t\in \R}$, such that
\[
 e^{-i tH} :  L^2(\R^d)\to L^2(\R^d) \ \text{unitary}.
\]
The nonlinear dynamics given by \eqref{nls} (formally) conserves the mass 
\begin{equation}\label{mass}
M(u) : = \| u \|_{L^2(\R^d)}^2\,,
\end{equation}
and the total energy 
\begin{equation}\label{energy}
\begin{split}
E(u) := &\, \int_{\R^d} \frac{1}{2} |\nabla u|^2 + V(x) |u|^2 + \frac{\lambda}{\sigma +1}  | u|^{2\sigma+2} \, dx \\=
& \big\|H^{1/2}u\big\|^2_{L^{2}(\R^d)}+\frac{\lambda}{\sigma +1} \| u \|^{2\sigma+2}_{L^{2\sigma+2}(\R^d)}.
\end{split}
\end{equation}
Another important physical quantity is the mean angular momentum of $u$ around a given rotation axis in $\R^3$. 

To fix ideas, and without loss of generality, we 
denote the coordinates of a point in $\R^3$ as $x=(x_1,x_2,z)$ and
assume that the rotation axis is the $z$-axis. 
In this 
case, the mean angular momentum of $u$ is given by
\[
L (u): = {\langle u,  L_zu \rangle}_{L^2(\R^d)}\,,
\]
where $L_z$ is the third component of the quantum mechanical angular momentum operator $\mathbb L=-i x\wedge \nabla$ , i.e.
\begin{equation*}
L_z u= -i \big(x_1 \partial_{x_2} u- x_2\partial_{x_1}u\big)\,.
\end{equation*}
Note that in cylindrical coordinates $(r,z, \varphi)$ in $\R^3$, we simply have
\begin{equation}\label{eq:Lphi}
L_z u = -i \partial_\varphi u.
\end{equation}
In $\R^2$ we use the standard convention of simply setting $x=(x_1,x_2)$, and $L$, $L_z$ remain as above.

A simple computation (see \cite{AMS}) shows that the time-evolution of $L(u)$ under the flow of \eqref{nls} satisfies: 
\[
L(u(t, \cdot)) + i \int_0^t \int_{\R^d} |u(\tau,x)|^2 L_z V(x) \, dx \, d\tau = L(u_0).
\]
Thus, in the case where $V$ is axis-symmetric, i.e. $L_z V\equiv 0$, the dynamics of \eqref{nls} also satisfies the angular momentum conservation law
\begin{equation*}\label{Lmomentum}
L(u(t, \cdot)) =L(u_0),\quad \forall t\in \R.
\end{equation*}
In particular, this is true in our situation, in view of \eqref{eq:Lphi} and the assumption that $V$ is radially symmetric.
\smallskip

To make these formal computations rigorous, let $s\ge 0$ and consider the family of natural energy spaces
\[
\mathcal H^s := \big\{ f\in L^2(\R^d) \, : \, \|  f\|^2_{\mathcal H^s} \equiv \| f \|^2_{L^2(\R^d)}+ \| H^{s/2} f\|^2_{L^2(\R^d)} <\infty \big \}\,.
\]
It is shown in \cite{YZ} that we have the norm equivalence
\[
\|  f\|^2_{\mathcal H^s} \simeq \|  f\|^2_{H^s(\R^d)} +\|  V^{s/2} f \|^2_{L^2(\R^d)}\, ,
\]
where $H^s(\R^d)$ denotes the usual $L^2$-based Sobolev space of order $s\ge 0$. Sobolev's Embedding theorem guarantees that
\[
H^{1}(\R^d)\hookrightarrow L^{2\sigma+2}\ \text{provided that $\sigma<\frac{2}{(d-2)_+}$}\,.
\] 
In particular, for $d=2$ we have $\sigma < \infty$. Thus, under this restriction on $\sigma>0$, both $M(u)$ and $E(u)$ are well defined functionals 
on $\mathcal H^1$. 
By employing Young's inequality, one also sees that $L(u)$ is well-defined. Indeed,
\begin{align*}
\left|\langle u, L_z u\rangle  \right|  & \le \|x u \|_{L^{2}(\R^d)}\|\nabla u \|_{L^{2}(\R^d)}  \lesssim \| xu\|^{2}_{L^{2}(\R^d)} +  \|\nabla u \|^{2}_{L^{2}(\R^d)}\\
&=\int_{|x|\leq R}|xu|^2\, dx+\int_{|x|>R} |xu|^2\, dx+\|\nabla u\|^{2}_{L^{2}(\R^d)} ,
\end{align*} 
and we can estimate 
\begin{align*}
\left|\langle u, L_z u\rangle  \right|  & \lesssim R^2\int_{|x|\leq R}|u|^2\, dx+\int_{|x|>R}V^s(x)|u|^2\, dx+\|\nabla u\|^{2}_{L^{2}(\R^d)}\\& \leq R^2\|u\|^{2}_{L^{2}(\R^{d})}+
 \|V^{s/2}u\|^{2}_{L^{2}(\R^{d})}+\|\nabla u\|^{2}_{L^{2}(\R^d)}\lesssim \|  u\|^2_{\mathcal H^s}\,.&
\end{align*}
Here $R$ is chosen as in Assumption \ref{ass1}, which guarantees that $|x|^2\lesssim V(x)^s$ for $|x|>R$ and $s\geq 1$.

Using space-time Strichartz estimates established in \cite{YZ}, together with the conservation laws above, 
then yields the following well-posedness result:

\begin{proposition}[\cite{Ca}] \label{prop:GWP} 
Let $d=2,3$, and $V$ satisfy Assumption \ref{ass1}. Let $s\ge 1$, $u_0\in \mathcal H^s$, and assume that one of the following holds:
\begin{itemize} 
\item[(i)] $\lambda\le 0$ and $\sigma < \frac{2}{d}$, or
\item[(ii)] $\lambda > 0$ and $\sigma < \frac{k+2}{k(d-2)_+}$. 
\end{itemize} 
Then there exists a unique global 
solution $u\in C(\R; \mathcal H^s)$ to \eqref{nls}, depending continuously on $u_0$, and satisfying the conservation laws of mass, energy, and angular momentum.
\end{proposition}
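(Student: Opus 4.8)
The plan is to obtain the result by combining a standard local well-posedness theory for semilinear Schr\"odinger equations---set up as a fixed point argument on the Duhamel formulation and driven by the Strichartz estimates of \cite{YZ}---with the conservation laws, which furnish an a priori bound on the $\mathcal H^1$-norm and thereby upgrade local to global existence. I would first treat the base case $s=1$ in detail, and recover the higher regularity $s>1$ at the end by a persistence-of-regularity argument: once a global $\mathcal H^1$ solution is in hand, the Strichartz estimates are run again at the $\mathcal H^s$ level, using the already-controlled lower-order norms to close the nonlinear estimate via fractional Leibniz bounds, so that the $\mathcal H^s$-norm cannot blow up on a finite interval.

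For the local theory I would write \eqref{nls} in Duhamel form
\begin{equation*}
u(t) = e^{-itH}u_0 - i\lambda \int_0^t e^{-i(t-\tau)H}\big(|u|^{2\sigma}u\big)(\tau)\, d\tau,
\end{equation*}
and look for a fixed point of the associated map in a space of the form $C([0,T];\mathcal H^1)\cap X_T$, where $X_T$ is built from the admissible Strichartz norms associated with the propagator $e^{-itH}$. Two ingredients are needed: the homogeneous and inhomogeneous Strichartz estimates for $e^{-itH}$ from \cite{YZ} (which, in contrast to the flat case, hold on finite time intervals and may lose derivatives because of the super-quadratic potential), and the fact that in the subcritical range $\sigma<\frac{2}{(d-2)_+}$ the nonlinearity $|u|^{2\sigma}u$ is locally Lipschitz as a map between the relevant spaces---this uses the Sobolev embedding $\mathcal H^1\hookrightarrow H^1\hookrightarrow L^{2\sigma+2}$ together with the norm equivalence $\mathcal H^s\simeq H^s\cap\{V^{s/2}\,\cdot\in L^2\}$ recalled above. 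A contraction on a short time interval then yields a unique local solution, continuous dependence on $u_0$, and a blow-up alternative: the maximal time $T_{\max}$ is either $+\infty$ or else $\|u(t)\|_{\mathcal H^1}\to\infty$ as $t\uparrow T_{\max}$.

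To globalize it therefore suffices to bound $\|u(t)\|_{\mathcal H^1}$ uniformly on finite intervals, and this is where the two hypotheses enter. Since $\|u(t)\|_{\mathcal H^1}^2\simeq M(u)+\|H^{1/2}u\|_{L^2}^2$ with $M$ conserved, it is enough to control $\|H^{1/2}u\|_{L^2}^2$. In case (ii), $\lambda>0$, the nonlinear contribution to the energy is nonnegative, so $\|H^{1/2}u(t)\|_{L^2}^2\le E(u(t))=E(u_0)$ directly. In case (i), $\lambda\le 0$, I would instead invoke the Gagliardo--Nirenberg inequality
\begin{equation*}
\|u\|_{L^{2\sigma+2}}^{2\sigma+2} \lesssim \|\nabla u\|_{L^2}^{d\sigma}\,\|u\|_{L^2}^{2\sigma+2-d\sigma},
\end{equation*}
and note that $\sigma<\frac{2}{d}$ means the power $d\sigma$ of $\|\nabla u\|_{L^2}$ is strictly less than $2$; Young's inequality then absorbs the negative nonlinear term into the kinetic energy and, together with mass conservation, produces the desired a priori bound. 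In both cases $T_{\max}=+\infty$.

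Two points need care. First, the conservation laws must be justified rigorously rather than formally: I would prove them for smooth, fast-decaying data---where the computations leading to \eqref{mass}, \eqref{energy}, and the angular-momentum identity are licit---and then pass to the limit using density and the continuous dependence in $\mathcal H^1$. Second, and this is the main obstacle, the local theory cannot rest on the usual constant-coefficient Strichartz estimates; one must use the variable-coefficient estimates of \cite{YZ}, whose restricted admissible range and derivative loss are precisely what force the sharper bound $\sigma<\frac{k+2}{k(d-2)_+}$ in the defocusing case (ii), stronger than the mere $H^1$-subcriticality $\sigma<\frac{2}{(d-2)_+}$ (consistent with the fact that the range widens to the critical one as $k\to 2^+$). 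Making the nonlinear estimates close within the $H$-adapted spaces $\mathcal H^s$ rather than the flat Sobolev spaces is the delicate part, and I would lean on the norm equivalence above to transfer the flat-space product and multiplier estimates to the present setting.
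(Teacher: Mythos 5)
Your proposal is correct and takes essentially the same route as the paper, which does not prove this proposition itself but quotes it from \cite{Ca}, remarking only that it follows from the space-time Strichartz estimates of \cite{YZ} combined with the conservation laws --- precisely the Duhamel-plus-Strichartz local theory and a-priori-bound globalization you outline. Your two key observations --- that the derivative loss in the variable-coefficient Strichartz estimates is what forces the restriction $\sigma<\frac{k+2}{k(d-2)_+}$ in the defocusing case (ii), and that Gagliardo--Nirenberg together with mass conservation closes the focusing, $L^2$-subcritical case (i) --- match the cited argument.
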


Note that for $d=3$, the upper bound in (ii) becomes $\sigma < 1+\frac{2}{k}$ for some $k>2$ given by Assumption \ref{ass1}. 
This consequently allows for the inclusion of nonlinearities slightly larger than the cubic one ($\sigma =1$) and simultaneously is
within the range of the Sobolev-imbedding $H^1(\R^3)\hookrightarrow L^{2\sigma+2}(\R^3)$ with $\sigma <2$.

\begin{hyp} \label{ass2} We assume that $\lambda$ and $\sigma$ satisfy one of {\rm (i)} or {\rm (ii)} above.\end{hyp}

Now, let $\Omega \in \R$ be a given angular velocity and recall that $\Omega L_z$ is the generator of time-dependent rotations around the 
$z$-axis, in the sense that for any $f \in L^2(\R^d)$:
\[
e^{it \Omega  L_z } f(x)= f\left(e^{-t\Theta }x \right),
\]
where $\Theta$ is the skew symmetric matrix given by
\[
\Theta=
  \begin{pmatrix}
    0 & \Omega \\
    -\Omega & 0 
  \end{pmatrix}\ \text{for $d=2$, and } 
  \Theta=
  \begin{pmatrix}
	0 & \Omega & 0\\
	-\Omega & 0 & 0\\
	0 & 0 & 0
	\end{pmatrix} \ \text{for $d=3$.}
\]
Clearly, $\big( e^{-it \Omega L_z} \big)_{t\in \R}$ is a family of unitary operators
\[
e^{it \Omega L_z}: \mathcal H^s \to \mathcal H^s.
\]
 Let $u(t, \cdot) \in \mathcal H^1$ be a global solution to \eqref{nls}, as guaranteed by Proposition \ref{prop:GWP}, and define a new unknown
\begin{equation}\label{eq:vu}
v(t,x) := e^{it \Omega  L_z } u(t,x) = u\left(t, e^{-t\Theta }x \right).
\end{equation}
A straightforward computation (cf. \cite{AMS}) then shows that $v$ satisfies the following equation 
\begin{equation}\label{vnls}
	i\partial_{t} v=-\frac{1}{2}\Delta v +\lambda|v |^{2\sigma} v+ V(x) v-\Omega L_z v, \quad v_{\vert{t=0}} = u_{0}.
\end{equation}
Here, we use the fact that $V$ is radially symmetric, and hence it commutes with the action of $e^{-it \Omega  L_z }$. In particular, $L\big(v(t,\cdot)\big)=L\big(u(t, \cdot)\big)$ for all $t\in \R$.

The NLS-type equation \eqref{vnls} appears in the mean-field description of {\it rotating Bose-Einstein condensates}, see, e.g., \cite{AMS, ANS, RS1}. In particular, time-periodic solutions of the form
\[
v(t,x) = \phi(x) e^{-i\omega t},
\]
satisfy the stationary NLS equation with rotation:
\begin{equation}\label{statnls}
	H\phi+\lambda|\phi|^{2\sigma}\phi=\omega\phi+\Omega L_z\phi .
\end{equation}
The latter is usually considered to be the Euler-Lagrange equation of the associated {\it Gross-Pitaevskii energy functional} with additional rotation term, i.e.
\begin{equation}\label{GPenergy}
E_\Omega(u) =\int_{\R^d} \frac{1}{2} |\nabla u|^2 + V(x) |u|^2 + \frac{\lambda}{\sigma +1}  | u|^{2\sigma+2} \, dx - \Omega L(u).
\end{equation}
Existence of physical {\it ground states} $\phi$ satisfying \eqref{statnls} can be proven 
by minimizing this $E_\Omega$ subject to a mass constraint. More precisely, for any given $m>0$, we denote the sphere
\[
\mathcal S_m := \big \{ u\in \mathcal{H}^1 \, : \, M(u) = m \big\}\,,
\]
where $M(u)$ is defined in \eqref{mass}, and consider the following minimization problem:
\begin{equation}\label{eq:mini1}
e_\Omega(m):=\inf _{u\in \mathcal S_m} E_\Omega(u)\,.
\end{equation}
If this infimum is achieved, i.e. if there exists $\phi \in \mathcal S_m$ such that $e_\Omega(m) = E_\Omega(\phi)$, 
then \eqref{statnls} can be seen as an equation for critical points satisfying the Lagrange condition
\begin{equation}\label{Lag1}
dE_\Omega(\phi) = \omega dM(\phi).
\end{equation}
Here, $\omega\in \R$ is the Lagrange multiplier associated to the mass constraint, usually called the {\it chemical potential}. Note that if $\phi \in \mathcal S_m$ 
is a minimizer, then so is $e^{i \theta} \phi$ for $\theta \in \R$, due to the gauge invariance of the mass $M$ and energy $E_\Omega$.

This approach has been successfully implemented in the seminal works \cite{RS1, RS2}, where it was proven that the set of ground states
\[
\mathcal G_{m, \Omega} := \big \{ \phi\in \mathcal S_{m}: E_\Omega(\phi)=e_\Omega(m) \big \}\,\not = \emptyset,
\] 
and several qualitative properties of these minimizers were established. Later studies in the same direction can be found 
in, e.g., \cite{ANS, BCPY, CRY}. All of these seek to give a mathematical description of {\it quantum vortices} which are known to appear in Bose-Einstein condensates under rotational forcing, cf. \cite{Co, Fe}. 
\begin{remark}
Note that, in the case of {\it quadratic} confining potentials $V$, an additional smallness condition on $|\Omega|$ is required to 
guarantee that $E_\Omega(u)$ is bounded below, cf. \cite{ANS, BC, RS1, RS2}. In the case of super-quadratic potentials, however, no such requirement arises (see the introduction of 
\cite{RS1}, in which the case of polynomially bounded, confining potentials $V$ is discussed). In particular, this 
allows for the appearance of a multiply quantized {\it giant vortex} provided $|\Omega|$ is sufficiently large, cf. \cite{CPRY}.
\end{remark}
A drawback of the approach outlined above is that one does not know how much vorticity the obtained minimizer $\phi$ carries. In particular, the numerical value $l\in \R$ of the mean angular momentum
\[
L(\phi)=\langle \phi , L_z \phi \rangle_{L^2(\R^d)}=l
\]
remains unknown.
Indeed, {\it a-priori} one can not even exclude the possibility that the minimizer $\phi$ is radially symmetric, in which case $L_z\phi\equiv 0$ 
and the last term on the r.h.s. of \eqref{statnls} simply vanishes. That this is not the case, in general, has 
been proved in \cite{RS1} in dimension $d=2$, and in \cite{RS2} for $d=3$ (see also \cite{BC, BQZ} for numerical simulations). 
Because a nonzero value of $L(\phi)$ signifies the onset of vortex nucleation in Bose-Einstein experiments, one might therefore ask the following question: 
For any given mass $m>0$, is it possible to guarantee the existence of stationary profiles $\phi$, which exhibit a certain predetermined mean angular momentum $0\not = l = L(\phi)$? 

We shall answer this question affirmatively below, by reinterpreting \eqref{statnls} as the Euler-Lagrange equation for a doubly constrained minimization problem. 
More precisely, instead of interpreting the stationary equation \eqref{statnls} via \eqref{Lag1}, we 
shall consider the following minimization problem:
\begin{equation}\label{eq:mini2}
e(m,l):=\inf \big\{E(u):u\in \mathcal{H}^1, M(u)=m , L(u)=l\big \}\,,
\end{equation}
where $m>0$ and $l\in \R$ are given constants, and $E(u)$ is the original energy functional defined in \eqref{energy}. 
In other words, we propose a 
different point of view to the problem of finding energy minimizers for rotating Bose-Einstein condensates within the framework of 
Gross-Pitaevskii theory. Instead of trying to minimize the rotating 
energy functional $E_\Omega$ subject to a single mass constraint, we 
consider the original energy $E$ subject to {\it two} constraints, one for the mass $M(u)=m$ and one for the (mean) 
angular momentum $L(u) = l$. As far as we know, such a doubly constrained 
minimization problem has never been studied before. 

In contrast to the previous minimization problem, the constraining set 
\[
\mathcal C_{m,l}:=\big \{\phi\in \mathcal{H}^1: M(\phi)=m, L(\phi)=l \big\}
\]
is no longer a sphere, but, as shown in Lemma \ref{lem:nonempty}, can be seen to contain 
a set which is isometrically isomorphic to the direct sum of two infinite dimensional 
spheres in $\ell^2(\Z)$.
Assuming for the moment that a minimizer exists, i.e. there is $\phi \in \mathcal C_{m,l}$ such that 
$e(m, l) = E(\phi)$, 
the stationary equation \eqref{statnls} is then obtained as the following Euler-Lagrange condition:
\[
dE(\phi) = \omega dM(\phi) + \Omega dL(\phi).
\]
Here $\Omega\in \R$ is a second Lagrange multiplier associated to the angular momentum constraint $L(\phi) = l$. Clearly, if $l\not =0$, any such doubly constrained minimizer $\phi$ 
{\it cannot be radially symmetric} by construction. In turn, we do not know whether  uniqueness of minimizers (up to phase conjugation) holds in general. 

\smallskip

Our first main result can then be stated as follows:

\begin{theorem}[Existence of minimizers]\label{thm:main}
Suppose that Assumptions~\ref{ass1} and \ref{ass2} hold. 
Then for given $m>0$ and $l\in \R$, there exists $\phi \in \mathcal{H}^1$ with $M(\phi)=m$ and $L(\phi) =l$, such that 
\[E(\phi)=e(m,l).\]
The minimizer $\phi$ is a weak solution to \eqref{statnls} with associated Lagrange multipliers $\omega, \Omega\in \R$, satisfying
\begin{equation}\label{eq:id}
e(m,l) + \frac{\lambda \sigma}{\sigma +1} \| \phi \|_{L^{2\sigma+2}}^{2\sigma+2} = \omega m + \Omega l.
\end{equation}
Moreover, in the case of zero mean vorticity $l=0$ it holds
\[
e({m, 0}) = E(\phi^\ast) = \inf _{u\in \mathcal S_m} E(u)
\]
and $\phi^\ast$ is radially symmetric. 
\end{theorem}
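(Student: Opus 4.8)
The plan is to establish existence by the direct method in the calculus of variations, the crucial ingredient being the compactness provided by the super-quadratic confinement, and then to treat the symmetric case $l=0$ separately by rearrangement. First I would check that $e(m,l)$ is finite and that minimizing sequences are bounded. For $\lambda\ge 0$ every term of $E$ is nonnegative, so $E\ge 0$. For $\lambda<0$ I would invoke the Gagliardo--Nirenberg inequality $\|u\|_{L^{2\sigma+2}}^{2\sigma+2}\lesssim \|\nabla u\|_{L^2}^{d\sigma}\|u\|_{L^2}^{2\sigma+2-d\sigma}$, and use the mass constraint $\|u\|_{L^2}^2=m$ together with the subcriticality $\sigma<2/d$ of Assumption~\ref{ass2}(i) (so that $d\sigma<2$) to absorb the negative nonlinear term into $\tfrac12\|\nabla u\|_{L^2}^2$. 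This yields both $e(m,l)>-\infty$ and a coercivity bound of the form $E(u)\gtrsim \|\nabla u\|_{L^2}^2+\|V^{1/2}u\|_{L^2}^2-C$ on $\mathcal S_m$; hence any minimizing sequence $(u_n)\subset\mathcal C_{m,l}$ (nonempty by Lemma~\ref{lem:nonempty}) is bounded in $\mathcal H^1$.

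Passing to a subsequence, I would extract a weak limit $u_n\rightharpoonup\phi$ in $\mathcal H^1$. The decisive point, which distinguishes this problem from NLS on free space, is that the confining potential makes $H$ have compact resolvent, so that $\mathcal H^1\hookrightarrow L^2$ is compact and, after interpolation with the Sobolev embedding, so is $\mathcal H^1\hookrightarrow L^{2\sigma+2}$. Thus $u_n\to\phi$ strongly in $L^2$ and in $L^{2\sigma+2}$, giving at once $M(\phi)=m$ and convergence of the nonlinear energy. The most delicate part is the survival of the angular momentum constraint. For this I would first upgrade to the weighted statement $x_ju_n\to x_j\phi$ strongly in $L^2$: by the norm equivalence of \cite{YZ}, the $\mathcal H^1$-bound controls $\||x|^{k/2}u_n\|_{L^2}$ with exponent $k/2>1$, so splitting $\int|x|^2|u_n-\phi|^2$ over $\{|x|\le\rho\}$ (where Rellich gives strong $L^2_{\mathrm{loc}}$ convergence) and $\{|x|>\rho\}$ (where the surplus decay $|x|^{2-k}$ makes the tail $\lesssim\rho^{2-k}$) and letting $\rho\to\infty$ produces strong convergence in $L^2(|x|^2\,dx)$. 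Writing $L(u)=-i\int\bar u\,(x_1\partial_{x_2}u-x_2\partial_{x_1}u)\,dx$ as a pairing of the strongly convergent factors $x_ju_n$ against the weakly convergent factors $\partial_k u_n$ then gives $L(u_n)\to L(\phi)$, so $L(\phi)=l$. Finally, weak lower semicontinuity of the two convex quadratic terms together with the exact convergence of the nonlinear term gives $E(\phi)\le\liminf_n E(u_n)=e(m,l)$; since $\phi\in\mathcal C_{m,l}$, the reverse inequality is trivial and $\phi$ is a minimizer. I expect this compactness/weak-continuity step, and in particular the weighted convergence underlying $L(\phi)=l$, to be the main obstacle; everything else is standard semicontinuity or rearrangement.

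To derive the Euler--Lagrange equation I would apply the Lagrange multiplier theorem to the two $C^1$ constraint functionals $M$ and $L$ at $\phi$, obtaining $\omega,\Omega\in\R$ with $dE(\phi)=\omega\,dM(\phi)+\Omega\,dL(\phi)$, which is precisely the weak form of \eqref{statnls}. The only case requiring separate attention is when $dM(\phi)$ and $dL(\phi)$ are linearly dependent, i.e. when $L_z\phi$ is a real multiple of $\phi$; there the equation reduces to the single-multiplier situation and still holds. Testing \eqref{statnls} against $\phi$ gives $\|H^{1/2}\phi\|_{L^2}^2+\lambda\|\phi\|_{L^{2\sigma+2}}^{2\sigma+2}=\omega m+\Omega l$, and combining this with $E(\phi)=\|H^{1/2}\phi\|_{L^2}^2+\tfrac{\lambda}{\sigma+1}\|\phi\|_{L^{2\sigma+2}}^{2\sigma+2}=e(m,l)$ yields the identity \eqref{eq:id}.

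For the case $l=0$ I would argue by symmetric decreasing rearrangement. The inclusion $\mathcal C_{m,0}\subset\mathcal S_m$ gives $e(m,0)\ge\inf_{u\in\mathcal S_m}E(u)$ for free. Conversely, for any $u\in\mathcal S_m$ its symmetric decreasing rearrangement $u^\ast=|u|^\ast$ preserves $\|u^\ast\|_{L^2}=\|u\|_{L^2}$ and $\|u^\ast\|_{L^{2\sigma+2}}=\|u\|_{L^{2\sigma+2}}$, does not increase the Dirichlet energy by the P\'olya--Szeg\H{o} inequality, and does not increase $\int V|u|^2$ because $V$ is radial and nondecreasing (so $\int_{B_\rho}|u|^2\le\int_{B_\rho}|u^\ast|^2$ for every ball by the bathtub principle). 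Hence $E(u^\ast)\le E(u)$, and since $u^\ast$ is radial we have $L_zu^\ast\equiv0$, so $u^\ast\in\mathcal C_{m,0}$ and therefore $e(m,0)\le\inf_{u\in\mathcal S_m}E(u)$, giving equality. Applying the same rearrangement to the minimizer $\phi$ from the second paragraph produces a radial $\phi^\ast\in\mathcal C_{m,0}$ with $E(\phi^\ast)\le E(\phi)=e(m,0)$, which must then be an equality; thus $\phi^\ast$ is a radial minimizer realizing $e(m,0)=\inf_{u\in\mathcal S_m}E(u)$.
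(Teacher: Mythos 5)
Your proposal is correct and, for the core existence statement, follows essentially the same route as the paper: coercivity via Gagliardo--Nirenberg plus the mass constraint when $\lambda<0$, the direct method with the compact embedding of $\mathcal H^1$ into $L^p$, and a splitting of the angular-momentum integral into $\{|x|\le R\}$ (strong local $L^2$ convergence paired against weak $L^2$ convergence of the gradients) and $\{|x|>R\}$ (a tail of size $\mathcal O(R^{-(k-2)/2})$ controlled by $\|V^{1/2}u_n\|_{L^2}$ and the super-quadratic growth $k>2$). Your repackaging of the tail estimate as strong $L^2$ convergence of $x u_n$ is the same mechanism in cleaner form, your derivation of \eqref{eq:id} by testing \eqref{statnls} against $\phi$ matches the paper, and you are in fact more careful than the paper about the constraint-qualification issue in the Lagrange multiplier theorem (the degenerate case where $dM(\phi)$ and $dL(\phi)$ are dependent, i.e.\ $\phi$ is an $L_z$-eigenfunction). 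Where you genuinely diverge is the case $l=0$: the paper quotes external existence results \cite{HHMT,HS,JL,LRY,ZJ1} for the single mass-constrained minimizer $\phi^*$ and then invokes rearrangement (citing \cite{LL,LRY}) to make it radial, whereas you obtain everything internally --- rearranging an arbitrary $u\in\mathcal S_m$ gives $e(m,0)\le\inf_{u\in\mathcal S_m}E(u)$, and rearranging the doubly constrained minimizer already built in your first part produces the radial minimizer $\phi^*$. This is more self-contained (no external existence theorem is needed), at the price of spelling out the rearrangement inequalities yourself. One caveat applies equally to your argument and to the paper's: the bathtub-principle step $\int V|u^*|^2\le\int V|u|^2$ requires $V$ to be radially nondecreasing, which Assumption~\ref{ass1} does not literally guarantee (it constrains $V$ only outside a ball); you assert ``radial and nondecreasing'' as if given, while the paper hides the same requirement behind its citations, so this should be flagged as an implicit hypothesis rather than taken for granted.
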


\begin{remark} A drawback of 
our approach is that we do not know the precise value of 
the Lagrange multiplier $\Omega$ for which this constrained minimum is achieved. Indeed, the map 
$(m,l) \mapsto (\omega,\Omega)$ is highly nonlinear and its properties are not easily accessible. 
Note however that the same is true (and generally accepted) for the map $m\mapsto \omega$, see the discussion in \cite{LeNo}.\\
\end{remark}

From now on we denote the set of bound states, i.e. energy-minimizers with prescribed mass and angular momentum by 
\[
\mathcal B_{m,l}:=\big \{ \phi\in \mathcal C_{m, l}: E(\phi)=e(m,l) \big \}\,.
\]
Recalling the change of variables \eqref{eq:vu}, Theorem \ref{thm:main} guarantees the existence of a class of stable time-dependent 
solutions $u$ to the original equation \eqref{nls}:

\begin{theorem}[Orbital stability]\label{thm:stab} 
Under the same assumptions as in Theorem \ref{thm:main}, there exist rotating, nonlinear bound state 
solutions to \eqref{nls} in the form:
\[
u_{\rm rot} (t,x) = e^{-i \omega t} e^{-it \Omega  L_z }  \phi(x),\quad \phi \in \mathcal B_{m,l}.
\]
which satisfy, for all $t\in \R$,
\[
M(u_{\rm rot}(t, \cdot)) = m\,, \quad E(u_{\rm rot}(t, \cdot)) = e(m, l)\,, \quad L(u_{\rm rot}(t, \cdot)) = l\,.
\]
Moreover, the set of these bound states is orbitally stable under the flow of \eqref{nls}.
\end{theorem}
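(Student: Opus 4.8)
The plan is to split the argument into three parts: first verify that $u_{\rm rot}$ is a genuine solution of \eqref{nls} carrying the three conserved quantities; second establish a compactness property of almost-minimizing sequences; and third deduce orbital stability of the whole set $\mathcal B_{m,l}$ by a contradiction argument in the spirit of Cazenave--Lions.

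For the first part I would invoke the change of variables \eqref{eq:vu}. Given $\phi\in\mathcal B_{m,l}$, Theorem \ref{thm:main} provides multipliers $\omega,\Omega$ with $\phi$ a weak solution of \eqref{statnls}; substituting the time-periodic ansatz $v(t,x)=e^{-i\omega t}\phi(x)$ into \eqref{vnls} reproduces \eqref{statnls} exactly, and undoing the rotation via $u=e^{-it\Omega L_z}v$ exhibits $u_{\rm rot}$ as a solution of \eqref{nls} in $C(\R;\mathcal H^1)$ (the solution of Proposition \ref{prop:GWP} with datum $\phi$). The conservation laws are then immediate: $M(u_{\rm rot}(t))=\|\phi\|^2_{L^2}=m$ since $e^{-i\omega t}$ and $e^{-it\Omega L_z}$ act unitarily on $L^2$, while $L(u_{\rm rot}(t))=l$ and $E(u_{\rm rot}(t))=e(m,l)$ follow from conservation along the flow evaluated at $t=0$ (angular-momentum conservation being exactly where the radial symmetry of $V$ enters). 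I would also record the structural fact that $\mathcal B_{m,l}$ is invariant under the two symmetry actions $\phi\mapsto e^{i\theta}\phi$ and $\phi\mapsto e^{-isL_z}\phi$, since $M$, $E$, $L$ are all invariant under phase multiplication and under rotations about the $z$-axis (the latter again using that $V$ is radial); in particular the entire orbit $\{u_{\rm rot}(t)\}_{t\in\R}$ lies inside $\mathcal B_{m,l}$, so set-stability directly controls deviations from $u_{\rm rot}$.

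The heart of the matter is the following compactness statement: any sequence $(w_n)\subset\mathcal H^1$ with $M(w_n)\to m$, $L(w_n)\to l$ and $E(w_n)\to e(m,l)$ admits a subsequence converging strongly in $\mathcal H^1$ to some element of $\mathcal B_{m,l}$. I would prove this exactly as in the existence argument behind Theorem \ref{thm:main}: coercivity of $E$ under the mass bound (valid under Assumption \ref{ass2}, via Gagliardo--Nirenberg in the focusing case) gives $\mathcal H^1$-boundedness, hence weak convergence $w_n\rightharpoonup w$ along a subsequence. Super-quadratic confinement makes the embeddings $\mathcal H^1\hookrightarrow L^2$, $\mathcal H^1\hookrightarrow L^{2\sigma+2}$, and (using $|x|^2\lesssim V^s$) the weighted map $w\mapsto xw\in L^2$ all \emph{compact}, so $w_n\to w$ strongly in each. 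Strong $L^2$-convergence gives $M(w)=m$; strong convergence of $x w_n$ together with weak convergence of $\nabla w_n$ gives $L(w_n)\to L(w)$, whence $L(w)=l$; and weak lower semicontinuity of $\|H^{1/2}\cdot\|^2_{L^2}$ yields $E(w)\le e(m,l)$, while $w\in\mathcal C_{m,l}$ forces $E(w)\ge e(m,l)$. Thus $w\in\mathcal B_{m,l}$, and the resulting equality $\|H^{1/2}w_n\|_{L^2}\to\|H^{1/2}w\|_{L^2}$ upgrades weak to strong $\mathcal H^1$-convergence.

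Finally I would argue orbital stability by contradiction. If it fails there exist $\varepsilon_0>0$, data $u_0^n$ with $\mathrm{dist}_{\mathcal H^1}(u_0^n,\mathcal B_{m,l})\to 0$, and times $t_n$ with $\mathrm{dist}_{\mathcal H^1}(u^n(t_n),\mathcal B_{m,l})\ge\varepsilon_0$, where $u^n$ is the solution from Proposition \ref{prop:GWP} with datum $u_0^n$. Continuity of $M,E,L$ on $\mathcal H^1$ applied to the $\mathcal H^1$-close $u_0^n$ gives $M(u_0^n)\to m$, $L(u_0^n)\to l$, $E(u_0^n)\to e(m,l)$, and by the conservation laws these persist for $w_n:=u^n(t_n)$. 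The compactness statement then yields a subsequence of $(w_n)$ converging in $\mathcal H^1$ to a point of $\mathcal B_{m,l}$, contradicting $\mathrm{dist}_{\mathcal H^1}(w_n,\mathcal B_{m,l})\ge\varepsilon_0$. The main obstacle, and the only place the hypotheses are genuinely used, is the compactness statement: one must rule out escape of mass to spatial infinity (handled by the confining potential, which is precisely why $k>2$ is imposed) and ensure that the \emph{indefinite} functional $L$ passes to the limit — delicate because $L$ is not sign-definite, but resolved by the strong convergence $xw_n\to xw$ coming from the weighted compact embedding.
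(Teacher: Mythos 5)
Your proposal is correct, and its analytic core coincides with the paper's: a Cazenave--Lions contradiction argument whose compactness input recycles the existence proof (coercivity under Assumption \ref{ass2}, compact embeddings coming from the super-quadratic confinement, the $|x|\le R$ versus $|x|>R$ splitting to pass to the limit in $L$, weak lower semicontinuity of the quadratic part of $E$, and the upgrade to strong $\mathcal H^1$ convergence via convergence of $\|H^{1/2}\cdot\|_{L^2}$ once the $L^{2\sigma+2}$ term converges strongly). The differences are organizational but worth recording. First, the paper proves set-stability of $\mathcal B_{m,l}$ under the rotating-frame equation \eqref{vnls} (Proposition \ref{prop:stab}) and then transfers it to \eqref{nls} through the unitary, $\mathcal H^1$-isometric map $e^{-it\Omega L_z}$, which preserves $M$, $E$ and $L$; you run the contradiction directly on the flow of \eqref{nls}, which is legitimate because Proposition \ref{prop:GWP} already provides conservation of $M$, $E$ and $L$ for \eqref{nls} itself (radiality of $V$ entering for $L$), and because, as you note, $\mathcal B_{m,l}$ is invariant under rotations about the $z$-axis, so the two formulations of set-stability agree. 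Your route avoids the frame change; the paper's isolates the statement in the frame where the bound states are genuinely time-periodic. Second, you abstract the angular-momentum step into compactness of the weighted map $w\mapsto xw$ from $\mathcal H^1$ to $L^2$, with strong-times-weak convergence handling $\langle x w_n,\nabla w_n\rangle$, whereas the paper repeats the $R$-splitting estimate inline (in Proposition \ref{grd} and again in Proposition \ref{prop:stab}); your packaging is cleaner and makes explicit that the hypothesis $k>2$ enters only there. Finally, your contradiction hypothesis with distance to the set $\mathcal B_{m,l}$ tending to zero (nearby minimizers allowed to vary with $n$) is the exact negation of set-stability; the paper fixes a single $\phi_0$, which is harmless since only the constant values $M=m$, $E=e(m,l)$, $L=l$ on $\mathcal B_{m,l}$ are used, but your version is the more careful one.
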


Note that the non-uniqueness of minimizers precludes us from proving orbital stability of individual bound states. We only have stability of the full set.

\begin{remark}
An analogous orbital stability result for mass-constrained energy minimizers of \eqref{eq:mini1} was proved
in \cite{ANS}. The present paper can thus be 
seen as complimentary approach to the construction of such rotating bound states, 
with the additional property that we can prescribe their mean angular momentum. 
\end{remark}

Recall the last statement in Theorem \ref{thm:main}, which shows that for $l=0$ and $\Omega =0$ the minimization problems \eqref{eq:mini1} and \eqref{eq:mini2} are equivalent, i.e.
\[
e(m,0) = e_0(m) = E(\phi^\ast).
\]
One might wonder how \eqref{eq:mini1} and \eqref{eq:mini2}  relate to each other more generally. Indeed we shall prove the following:

 \begin{theorem}[Relation between sets of minimizers]\label{thm:connect}
 Suppose that Assumptions~\ref{ass1} and \ref{ass2} hold. Let $m>0$ and $\Omega \ge 0$ be given. Then it holds
 \[
 e_\Omega(m) = \min_{l\ge 0} \big(e(m, l) - \Omega l \big).
 \]
Denoting the set $\mathcal L_\Omega = \{ l\ge 0 \, : \,  e_\Omega(m) =  e(m, l) - \Omega l\}$, we have that
$\mathcal L_\Omega\not=\emptyset$ and 
 \[
 \mathcal G_{m,\Omega} = \bigcup_{l \in \mathcal L_\Omega} \mathcal B_{m,l}.
 \]
 \end{theorem}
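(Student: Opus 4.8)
The plan is to regard the single-constraint problem \eqref{eq:mini1} as a \emph{foliated} version of the doubly constrained problem \eqref{eq:mini2}, slicing the sphere $\mathcal S_m$ according to the value of the angular momentum. Writing $E_\Omega(u)=E(u)-\Omega L(u)$ and using that $L\equiv l$ is constant on $\mathcal C_{m,l}$, I would first record the elementary identity
\[
e_\Omega(m)=\inf_{u\in\mathcal S_m}\big(E(u)-\Omega L(u)\big)=\inf_{l\in\R}\inf_{u\in\mathcal C_{m,l}}\big(E(u)-\Omega l\big)=\inf_{l\in\R}\big(e(m,l)-\Omega l\big),
\]
which is immediate once one notes that $\mathcal S_m=\bigcup_{l\in\R}\mathcal C_{m,l}$ is a disjoint union, each slice being nonempty by Lemma~\ref{lem:nonempty}.

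The second step is to reduce the infimum over $l\in\R$ to one over $l\ge0$. The tool is the conjugation symmetry $u\mapsto\overline u$: since $E(\overline u)=E(u)$, $M(\overline u)=M(u)$ and $L(\overline u)=-L(u)$, this map identifies $\mathcal C_{m,l}$ isometrically with $\mathcal C_{m,-l}$, whence $l\mapsto e(m,l)$ is even. For $\Omega\ge0$ and $l>0$ we then have $e(m,-l)-\Omega(-l)=e(m,l)+\Omega l\ge e(m,l)-\Omega l$, so the infimum is attained among $l\ge0$, giving the stated formula modulo attainment. The same symmetry forces $L(\psi)\ge0$ for any minimizer $\psi$ of $E_\Omega$ on $\mathcal S_m$ when $\Omega>0$: otherwise $\overline\psi\in\mathcal S_m$ would satisfy $E_\Omega(\overline\psi)=E_\Omega(\psi)+2\Omega L(\psi)<e_\Omega(m)$, a contradiction. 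For $\Omega=0$ this argument is vacuous, and I would instead invoke Theorem~\ref{thm:main} (and the symmetrization in its proof), by which every global minimizer of $E$ on $\mathcal S_m$ is, up to a constant phase, a nonnegative radial function and hence carries $L=0$; this forces $e(m,l)>e(m,0)$ for $l\ne0$, so that $\mathcal L_0=\{0\}$.

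The crux is attainment, i.e.\ $\mathcal L_\Omega\ne\emptyset$. I would obtain it from the existence of a minimizer $\psi\in\mathcal G_{m,\Omega}$ of the single-constraint problem, which holds for super-quadratic confinement (cf.\ \cite{RS1,RS2}) and also follows from the compactness analysis underlying Theorem~\ref{thm:main}. Setting $l_*:=L(\psi)\ge0$, the chain
\[
e_\Omega(m)=E_\Omega(\psi)=E(\psi)-\Omega l_*\ge e(m,l_*)-\Omega l_*\ge\inf_{l\ge0}\big(e(m,l)-\Omega l\big)=e_\Omega(m)
\]
must collapse to equalities, giving $E(\psi)=e(m,l_*)$ (so $\psi\in\mathcal B_{m,l_*}$) and $e(m,l_*)-\Omega l_*=e_\Omega(m)$ (so $l_*\in\mathcal L_\Omega$). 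Where a self-contained proof is preferred, I would argue directly by coercivity and lower semicontinuity: for any $\Omega'>\Omega$ one has $e(m,l)-\Omega' l\ge e_{\Omega'}(m)$, hence $e(m,l)-\Omega l\ge e_{\Omega'}(m)+(\Omega'-\Omega)l\to+\infty$ as $l\to+\infty$, so minimizing sequences in $l$ are bounded, and the lower semicontinuity of $l\mapsto e(m,l)$ follows from the same $\mathcal H^1\hookrightarrow L^2$ compactness used for Theorem~\ref{thm:main}. I expect this attainment step to be the main obstacle, and I would emphasize that it is precisely here that \emph{super-quadratic} growth of $V$ is decisive: it guarantees $e_{\Omega'}(m)>-\infty$ for \emph{all} $\Omega'$, which is exactly what makes $e(m,l)-\Omega l$ coercive in $l$.

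Finally I would establish the set identity $\mathcal G_{m,\Omega}=\bigcup_{l\in\mathcal L_\Omega}\mathcal B_{m,l}$ by proving both inclusions. The inclusion ``$\subseteq$'' is the collapse argument above applied to an arbitrary $\psi\in\mathcal G_{m,\Omega}$: it has $L(\psi)=l_*\ge0$, lies in $\mathcal B_{m,l_*}$, and $l_*\in\mathcal L_\Omega$. For ``$\supseteq$'', take $l\in\mathcal L_\Omega$ and $\phi\in\mathcal B_{m,l}$; then $M(\phi)=m$ and $E_\Omega(\phi)=E(\phi)-\Omega L(\phi)=e(m,l)-\Omega l=e_\Omega(m)$, so $\phi\in\mathcal G_{m,\Omega}$. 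The degenerate case $\Omega=0$ is then covered by the symmetry discussion, yielding $\mathcal L_0=\{0\}$ and $\mathcal G_{m,0}=\mathcal B_{m,0}$, consistent with the last statement of Theorem~\ref{thm:main}.
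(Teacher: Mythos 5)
Your proof is correct and follows essentially the same route as the paper's (Proposition~\ref{prop:connect} plus the proof of Theorem~\ref{thm:connect}): the symmetry $e(m,l)=e(m,-l)$ via an $l\mapsto -l$ transformation (you use complex conjugation, the paper reflects $x_1\mapsto -x_1$), the inequality $e(m,l)-\Omega l\ge e_\Omega(m)$ from the inclusion $\mathcal C_{m,l}\subset\mathcal S_m$, attainment by taking a minimizer $\phi_\Omega$ of $E_\Omega$ from \cite{RS1, RS2} and collapsing the resulting chain of inequalities to get $\phi_\Omega\in\mathcal B_{m,L(\phi_\Omega)}$ with $L(\phi_\Omega)\in\mathcal L_\Omega$, and the same two inclusions for the set identity. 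The only points where you go beyond the paper's write-up are the explicit treatment of $\Omega=0$ (the paper's proof starts by assuming $\Omega>0$ and implicitly defers that case to Lemma~\ref{lem:zero}) and your sketched alternative attainment argument via coercivity, $e(m,l)-\Omega l\ge e_{\Omega'}(m)+(\Omega'-\Omega)l$ for $\Omega'>\Omega$, combined with lower semicontinuity of $l\mapsto e(m,l)$, which, if carried out, would make the theorem independent of the existence results of \cite{RS1, RS2}.
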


As we will see in Section \ref{sec:prop}, the case $\Omega < 0$ can be treated by a simple transformation $\Omega \mapsto -\Omega$.

\begin{remark} The question of whether or not the two sets of minimizers obtained through \eqref{eq:mini1} and \eqref{eq:mini2} are in fact identical for $l\not =0$ is reminiscent of the 
distinction between minimal action ground states versus (normalized) energy ground states. This issue appears in the study of stationary 
NLS solutions with non-homogenous power-law nonlinearities, 
cf. \cite{CaSp, JL, LeNo} for an extensive discussion.
\end{remark}

As a consequence of Theorem~\ref{thm:connect} and the fact that the $\mathcal B_{m,l}$'s are mutually disjoint for different $m$'s and/or $l$'s, we can say that, for $\Omega_1,\Omega_2 \ge 0$,
\begin{equation*}
 \mathcal G_{m,\Omega_1}\cap  \mathcal G_{m,\Omega_2}=\bigcup_{l \in \mathcal L_{\Omega_1}\cap
 \mathcal L_{\Omega_2}} \mathcal B_{m,l}\,.
\end{equation*}
In particular, this means that if there exists a rotationally symmetric minimizer for some $\Omega>0$, then
$l=0\in\mathcal L_\Omega$, and so $e_\Omega(m)=e(m,0)=e_0(m)$ and
\[
\big\{\phi\in  \mathcal G_{m,\Omega}\,:\, L(\phi)=0\big\}=\mathcal B_{m,0}=\big\{\phi^*\big\}\,.
\]
Here $\phi^*$ is exactly as in Lemma~\ref{lem:zero} and Theorem~\ref{thm:main}.

The paper is now organized as follows: In Section \ref{sec:exist} we shall describe the set $\mathcal C_{m, l}$ in more detail and prove the existence of doubly constrained minimizers. 
Their orbital stability and their relationship to the minimizers of \eqref{eq:mini1} is then discussed in a series of results stated in Section \ref{sec:prop}.


\section{Existence of a doubly constrained minimizer}\label{sec:exist}

In this section we prove the existence of energy minimizers subject to two constraints. To this end, we 
first show that the constraining set $\mathcal C_{m, l}\not = \emptyset$ and thus allows for a non-trivial 
minimization procedure. 

We start by observing that in the particular case where $l$ and $m$ are such that $l  = n m$, for some $n\in \Z$, 
the constraining set $\mathcal C_{m, l}$ contains 
the sphere of radius $\sqrt{m}$ in the infinite dimensional eigenspace of $L_z$ corresponding to the eigenvalue 
$n\in \Z$. Indeed, let $(r, \vartheta, \varphi)\in [0, \infty)\times [0, \pi)\times [0, 2 \pi)$ be spherical coordinates in $\R^3$, then any function $u\in \mathcal H^1$ of the form 
\begin{equation}\label{eq:vortex}
u(r,\vartheta, \varphi) = f(r, \vartheta) e^{i n \varphi}, 
\end{equation}
satisfies both constraints, provided $f$ is a profile such that $f(0,\vartheta)=0$, and
\[
m = \| u\|^2_{L^2(\R^3)} = 2 \pi \int_0^\pi\int_0^\infty \! | f(r, \vartheta)|^2 r^2  \sin \vartheta\, dr d\vartheta.
\]
Note that here we use the convention from physics for denoting spherical coordinates, see \cite{Me}. 
However, if $\frac{l}{m}\notin\mathbb Z$, then such a $u\not \in \mathcal C_{m, l}$ and the argument fails. We treat the general case below.

\begin{remark}
Functions of the form \eqref{eq:vortex} are usually called {\it central vortex states} \cite{BC}. They are a possible ansatz for condensates in their giant vortex phase. 
In our case these vortex states are the only possible states comprising (possibly) non-regular minimizers, cf. the proof of Lemma \ref{lemreg}.
\end{remark}

\begin{lemma} \label{lem:nonempty} 
For any $m>0$ and $l\in \R$, $\mathcal C_{m,l}$ is isometrically isomorphic to a non-empty subset of
\[
\big\{(c_n)_{n\in \N_0}\subset \ell^2 \ : \ n^{1/2}c_n \in \ell^2\big\}\,.
\]
In fact, ${\rm dim} \, \mathcal C_{m,l}= \infty$.
\end{lemma}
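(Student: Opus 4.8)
The plan is to diagonalize the angular momentum operator and read off both constraints from the resulting angular Fourier expansion. Working in cylindrical coordinates $(r,z,\varphi)$ (and dropping $z$ when $d=2$), recall from \eqref{eq:Lphi} that $L_z=-i\partial_\varphi$, so its eigenfunctions are $e^{in\varphi}$, $n\in\Z$, with eigenvalue $n$. This gives the orthogonal decomposition $L^2(\R^d)=\bigoplus_{n\in\Z}\mathcal H_n$, where $\mathcal H_n=\{f(r,z)e^{in\varphi}\}$ is the (infinite dimensional) eigenspace of $L_z$ for the eigenvalue $n$. First I would expand an arbitrary $\phi\in\mathcal H^1$ as $\phi=\sum_{n\in\Z}\phi_n$ with $\phi_n\in\mathcal H_n$, and set $c_n:=\|\phi_n\|_{L^2}$. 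Using the orthogonality of the $e^{in\varphi}$ and $L_z\phi_n=n\phi_n$, the two functionals collapse to $M(\phi)=\sum_n c_n^2$ and $L(\phi)=\sum_n n\,c_n^2$, so the constraints become $\sum_n c_n^2=m$ and $\sum_n n\,c_n^2=l$, where $c_n^2$ is the mass carried by the $n$-th mode. Fixing an orthonormal basis of each $\mathcal H_n$ promotes this mode decomposition to a genuine linear isometry of $\mathcal H^1$ into a weighted $\ell^2$ space; since $L(\phi)=l$ is finite, the coefficients obey $n^{1/2}c_n\in\ell^2$, which is precisely the target space. Restricting to $l\ge 0$ (the case $l<0$ follows from $\phi\mapsto\bar\phi$, which reverses the sign of $L$) then gives the isometric identification of $\mathcal C_{m,l}$ with a non-empty subset of $\{(c_n)_{n\in\N_0}\subset\ell^2:\,n^{1/2}c_n\in\ell^2\}$.

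To show the set is non-empty I would solve the reduced moment problem $\sum_n c_n^2=m$, $\sum_n n\,c_n^2=l$ explicitly. Writing $\bar n:=l/m$ and choosing $n_0:=\lfloor \bar n\rfloor$, the two consecutive weights $n_0\le \bar n\le n_0+1$ let me place mass on just the modes $n_0$ and $n_0+1$: set $m_2:=l-n_0 m\ge 0$ and $m_1:=(n_0+1)m-l\ge 0$, so that $m_1+m_2=m$ and $n_0 m_1+(n_0+1)m_2=l$. Choosing unit-norm profiles $g_1\in\mathcal H_{n_0}$, $g_2\in\mathcal H_{n_0+1}$ — with each $g_i$ vanishing on the axis $r=0$ so that the corresponding mode lies in $\mathcal H^1$ — the function $\phi=\sqrt{m_1}\,g_1+\sqrt{m_2}\,g_2$ lies in $\mathcal C_{m,l}$. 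When $\bar n\in\Z$ a single mode suffices, recovering the central vortex states \eqref{eq:vortex} discussed above.

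For the dimension count, I would observe that the profile inside each occupied mode may be varied freely over the infinite dimensional space $\mathcal H_{n_0}$ (resp. $\mathcal H_{n_0+1}$) subject only to the fixed norm $\sqrt{m_i}$. Hence $\mathcal C_{m,l}$ contains an isometric copy of the product $S_{\sqrt{m_1}}(\mathcal H_{n_0})\times S_{\sqrt{m_2}}(\mathcal H_{n_0+1})$ of two infinite dimensional spheres, each isomorphic to a sphere in $\ell^2$, which immediately yields $\dim\mathcal C_{m,l}=\infty$. The step I expect to be the main obstacle is the rigorous justification of the termwise reduction in the first paragraph: one must show the angular Fourier series of $\phi\in\mathcal H^1$ converges in $\mathcal H^1$, that $L(\phi)=\langle\phi,L_z\phi\rangle$ genuinely equals $\sum_n n\,c_n^2$ with no contribution from the axis, and that the profiles built in the second paragraph really have finite $\mathcal H^1$ norm despite the $1/r^2$ singularity carried by $|\partial_\varphi\,\cdot\,|^2$ for nonzero modes. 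These are exactly the points where regularity at $r=0$ and the norm equivalence $\|\cdot\|_{\mathcal H^1}\simeq\|\cdot\|_{H^1}+\|V^{1/2}\cdot\|_{L^2}$ quoted earlier must be invoked with care.
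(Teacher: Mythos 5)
Your proposal is correct and follows essentially the same route as the paper: both decompose $\phi$ into eigenmodes of $L_z$, reduce the mass and angular momentum constraints to a moment problem for the modal masses, and solve it by distributing mass over two modes, with the free (unit-sphere) profiles inside each infinite-dimensional eigenspace yielding $\dim \mathcal C_{m,l}=\infty$. The only differences are cosmetic: the paper works with the spherical-harmonic basis $\psi_j^n=Y_j^n\chi_n$ and places mass on two well-separated modes $n_1<-|l|/m$ and $n_2>|l|/m$, whereas you use the cylindrical angular Fourier decomposition and the consecutive modes $\lfloor l/m\rfloor$ and $\lfloor l/m\rfloor+1$; both choices solve the same $2\times 2$ linear system with nonnegative solutions.
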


\begin{proof} We only discuss here the case $d=3$, the situation in $d=2$ being similar. 
The operator $L_z$ is essentially self-adjoint on $\mathcal H^1$ with purely discrete spectrum $\sigma(L_z) =  \mathbb Z$. The corresponding 
eigenspaces are, in general, infinitely degenerate. To have a better sense of the multiplicities, one 
usually forms an orthonormal basis of $L^2(\R^3)$ by using the common eigenfunctions to the commuting operators $L_z$, $H$, and 
$\mathbb L^2 = (-i x\wedge \nabla )^2$, see, e.g., \cite{Me}. Any such basis element is then given by
\[
\psi_j^n(r, \vartheta, \varphi)=Y_j^n(\vartheta, \varphi) \chi_n(r),
\]
where $(r, \vartheta, \varphi)$ are spherical coordinates in $\R^3$, $\chi_n$ solves the radial Schr\"odinger equation subject to $\chi_n(0)=0$, and the $Y_j^n$ are spherical harmonics. The latter satisfy, for any $j=0, 1, \dots , \infty$:
\begin{equation}\label{eq:harm}
L_z Y_j^n(\vartheta, \varphi) = n  Y_j^n(\vartheta, \varphi), \quad n = -j, -j+1, \dots, +j.
\end{equation}
Thus, one can decompose any $\phi \in \mathcal H^1$ via
\[
\phi(r, \vartheta, \varphi)=\sum_{j=0}^{\infty} \sum_{n=-j}^j c_{n, j}\, \psi_j^n(r, \vartheta, \varphi)
=\sum_{n\in\mathbb Z}\sum_{j=|n|}^\infty c_{n, j}\, \psi_j^n(r, \vartheta, \varphi)\,,
\]
where the sequence of constants $(c_{n, j})_{n,j}\subset \ell^2$. In terms of these coefficients the mass constraint reads
	\begin{align*}
		M(\phi)&=\Big \| \sum_{j=0}^{\infty} \sum_{n=-j}^j  c_{n,j}\,\psi_j^n\Big \|_{L^2}^2 
		=\sum_{j=0}^{\infty} \sum_{n=-j}^j |c_{n,j}|^2=\sum_{n\in\mathbb Z}\sum_{j=|n|}^\infty |c_{n,j}|^2=m\,.
	\end{align*}
Similarly, one obtains, in view of \eqref{eq:harm}, that
	\begin{align*}
		L(\phi)&=\langle \phi, L_z\phi\rangle =\sum_{j=0}^{\infty} \sum_{n=-j}^j n |c_{n,j}|^2=
		\sum_{n\in\mathbb Z}\bigg(n\!\sum_{j=|n|}^\infty |c_{n,j}|^2\bigg)=l.
	\end{align*}

Let $n_1\not= n_2\in \Z$, and denote
\[
f_1 (r,\vartheta, \varphi) = \sum_{j=|n_1|}^{\infty} c_{n_1, j} \psi^{n_1}_j(r,\vartheta, \varphi), \quad f_2 (r,\vartheta, \varphi) = \sum_{j=|n_2|}^{\infty} c_{n_2, j} \psi^{n_2}_j(r,\vartheta, \varphi)\,.
\]
Then $f_1$ and $f_2$ belong to distinct eigenspaces of $L_z$, and hence $\langle f_1, f_2\rangle_{L^2}=0$ and
\[
M(f_1+f_2) =M(f_1)+M(f_2)= \sum_{j=|n_1|}^{\infty} |c_{n_1, j}|^2 + \sum_{j=|n_2|}^{\infty} |c_{n_2, j}|^2,\,.
\]
Similarly, we get
\[
L(f_1 + f_2) = n_1\sum_{j=|n_1|}^{\infty} |c_{n_1, j}|^2 + n_2\sum_{j=|n_2|}^{\infty} |c_{n_2, j}|^2=n_1 M(f_1) + n_2 M(f_2).
\]
Since $n_1 \not = n_2$, the system
\[
 \begin{pmatrix}
    1 & 1 \\
    n_1 & n_2 
  \end{pmatrix}
  \begin{pmatrix}
    M(f_1) \\
    M(f_2)
  \end{pmatrix}
  =
 \begin{pmatrix}
    m \\
    l
  \end{pmatrix}
\]
can always be solved as
\[
M(f_1)= \frac{m n_2 - l  }{n_2-n_1}, \quad M(f_2) = \frac{l - m n_1 }{n_2-n_1}.
\]
Since $m>0$, we see that the requirement $M(f_1)>0$ and $M(f_2)>0$ can always be achieved, for example if $n_2>\frac{|l|}{m}$ and $n_1< -\frac{|l|}{m}$. 

In summary $f_1+f_2 \in \mathcal C_{m,l}$. This shows that $\mathcal C_{m, l}$ contains a set which is isometrically isomorphic to the direct product of two  (infinite dimensional)
spheres in $\ell^2(\mathbb Z)$.
\end{proof}

Next, we recall the following compact embedding result proved in, e.g. \cite[Lemma 3.1]{ZJ1}:

\begin{lemma}\label{emb}
	Let $2\leq p<\frac{2d}{(d-2)_+}$, the embedding $\mathcal{H}^1\hookrightarrow L^p(\R^d)$ is compact.
\end{lemma}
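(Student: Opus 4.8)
The plan is to combine the local compactness furnished by the Rellich--Kondrachov theorem with a uniform tail estimate exploiting the confining nature of $V$. Continuity of the embedding is immediate: the norm equivalence $\|f\|_{\mathcal H^1}^2 \simeq \|f\|_{H^1}^2 + \|V^{1/2}f\|_{L^2}^2$ gives $\|f\|_{\mathcal H^1} \gtrsim \|f\|_{H^1}$, and Sobolev's embedding $H^1(\R^d)\hookrightarrow L^p(\R^d)$ holds for $2\le p < \frac{2d}{(d-2)_+}$. For compactness I would take a sequence $(f_n)$ bounded in $\mathcal H^1$; by reflexivity, after passing to a subsequence, $f_n \rightharpoonup f$ weakly in $\mathcal H^1$, hence weakly in $H^1(\R^d)$, and the goal is to upgrade this to strong convergence $f_n\to f$ in $L^p(\R^d)$.

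First I would split the $L^p$-norm over a ball $B_R=\{|x|<R\}$ and its complement. On $B_R$, the classical Rellich--Kondrachov theorem yields the compact embedding $H^1(B_R)\hookrightarrow L^p(B_R)$ for $p<\frac{2d}{(d-2)_+}$, so along a diagonal subsequence $f_n\to f$ strongly in $L^p(B_R)$ for every fixed $R$. The crux is therefore to control the exterior contribution uniformly in $n$. Setting $\Lambda(R):=\inf_{|x|>R}V(x)$, Assumption~\ref{ass1} guarantees $\Lambda(R)\to+\infty$ as $R\to\infty$, and hence
\[
\int_{|x|>R}|f_n|^2\,dx \le \frac{1}{\Lambda(R)}\int_{|x|>R}V|f_n|^2\,dx \le \frac{C}{\Lambda(R)},
\]
where $C$ is a uniform bound for $\|V^{1/2}f_n\|_{L^2}^2\lesssim \|f_n\|_{\mathcal H^1}^2$. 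Thus the $L^2$-mass of $f_n$ in the tail vanishes uniformly in $n$ as $R\to\infty$.

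To transfer this $L^2$-decay to $L^p$, I would fix an exponent $q$ with $p<q<\frac{2d}{(d-2)_+}$ (any finite $q$ when $d=2$) and interpolate: with $\theta\in(0,1)$ determined by $\tfrac1p=\tfrac{1-\theta}{2}+\tfrac{\theta}{q}$, one has $\|f_n\|_{L^p(|x|>R)}\le \|f_n\|_{L^2(|x|>R)}^{1-\theta}\,\|f_n\|_{L^q(\R^d)}^{\theta}$. The first factor tends to $0$ uniformly in $n$ by the tail estimate above, while the second is bounded since $\|f_n\|_{L^q}\lesssim\|f_n\|_{H^1}\le C$ (the case $p=2$ is simply $\theta=0$). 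Combining the two regions, for given $\varepsilon>0$ I would first choose $R$ so large that the exterior $L^p$-norms of all $f_n$ and of $f$ fall below $\varepsilon$, and then send $n\to\infty$ to eliminate the interior part via Rellich--Kondrachov; this yields $\limsup_n\|f_n-f\|_{L^p(\R^d)}\lesssim\varepsilon$, and hence strong convergence.

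The main obstacle is precisely the uniform tail control: showing that the finiteness of $\int V|f_n|^2$ prevents mass from escaping to infinity, and then boosting this $L^2$ statement to $L^p$ through interpolation without losing uniformity in $n$. The growth of $V$ (in fact, mere confinement $V\to+\infty$ already suffices here) is exactly what forces $\Lambda(R)\to\infty$, which is the single ingredient distinguishing this compact embedding from the non-compact Sobolev embedding on all of $\R^d$.
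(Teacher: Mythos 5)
The paper offers no proof of this lemma at all: it is recalled verbatim from \cite[Lemma 3.1]{ZJ1}, so the only comparison available is with the standard argument behind that citation. Your proof is correct and complete, and it is precisely that standard argument --- weak convergence of a bounded sequence in $\mathcal H^1$, Rellich--Kondrachov on balls, the uniform tail bound $\int_{|x|>R}|f_n|^2\,dx\le \Lambda(R)^{-1}\big\|V^{1/2}f_n\big\|_{L^2}^2$ coming from the confinement $V(x)\to+\infty$, and interpolation between $L^2$ and $L^q$ (with $p<q<\tfrac{2d}{(d-2)_+}$, using the Sobolev bound for the $L^q$ factor) to upgrade the tail smallness from $L^2$ to $L^p$ uniformly in $n$.
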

Using this we can prove the existence the following minimizer. 

\begin{proposition}\label{grd}
Suppose that Assumptions \ref{ass1} and \ref{ass2} hold. 
Then for given $m>0$ and $l\in \R$, $\mathcal B_{m, l}\not = \emptyset$. More precisely, there exists a $\phi_\infty\in \mathcal C_{m,l}$,
such that 
	$$E(\phi_{\infty})=e(m,l)=\inf_{\phi \in \mathcal C_{m, l}} E(\phi).$$
\end{proposition}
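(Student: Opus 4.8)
The plan is to run the direct method of the calculus of variations on the constraining set $\mathcal C_{m,l}$, which is nonempty by Lemma~\ref{lem:nonempty}. First I would show that $E$ is bounded below and coercive along sequences in $\mathcal C_{m,l}$. In case~(ii) this is immediate, since for $\lambda>0$ every term in \eqref{energy} is nonnegative, so that $\|H^{1/2}u\|_{L^2}^2\le E(u)$ and hence $\|u\|_{\mathcal H^1}^2=\|u\|_{L^2}^2+\|H^{1/2}u\|_{L^2}^2\le m+E(u)$. In case~(i), where $\lambda\le 0$ and $\sigma<2/d$, I would instead invoke the Gagliardo--Nirenberg inequality $\|u\|_{L^{2\sigma+2}}^{2\sigma+2}\lesssim\|\nabla u\|_{L^2}^{d\sigma}\|u\|_{L^2}^{2\sigma+2-d\sigma}$; since $d\sigma<2$ and the mass is fixed, Young's inequality absorbs the (negative) nonlinear term into $\tfrac12\|\nabla u\|_{L^2}^2$ and leaves $E$ bounded below and coercive on $\mathcal H^1$. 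Either way, a minimizing sequence $(\phi_k)\subset\mathcal C_{m,l}$ with $E(\phi_k)\to e(m,l)$ is bounded in the Hilbert space $\mathcal H^1$, so after passing to a subsequence $\phi_k\rightharpoonup\phi_\infty$ weakly in $\mathcal H^1$.

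Next I would check that both constraints survive in the limit, so that $\phi_\infty\in\mathcal C_{m,l}$. The mass is the easy part: Lemma~\ref{emb} with $p=2$ gives $\phi_k\to\phi_\infty$ strongly in $L^2$, whence $M(\phi_\infty)=m$. The main obstacle of the whole argument is to pass to the limit in the angular momentum $L(\phi_k)=\langle\phi_k,L_z\phi_k\rangle$, which is a quadratic form involving the \emph{unbounded} weight $x$ together with a derivative. Splitting $L(\phi_k)-L(\phi_\infty)$ into $\langle L_z\phi_\infty,\phi_k-\phi_\infty\rangle$ and $\langle\phi_k-\phi_\infty,L_z\phi_k\rangle$, the first term vanishes because $L_z\phi_\infty\in L^2$ and $\phi_k\rightharpoonup\phi_\infty$ weakly in $L^2$. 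For the second term I would write $L_z=-i(x_1\partial_{x_2}-x_2\partial_{x_1})$ and use that $\partial_{x_j}\phi_k\rightharpoonup\partial_{x_j}\phi_\infty$ weakly in $L^2$, so that it suffices to prove the \emph{strong} convergence $x_j\phi_k\to x_j\phi_\infty$ in $L^2$. This is exactly where the super-quadratic growth $k>2$ enters: on $|x|\le A$ I bound $\|x_j(\phi_k-\phi_\infty)\|_{L^2(|x|\le A)}\le A\|\phi_k-\phi_\infty\|_{L^2}\to 0$, while on $|x|>A\ge R$ the estimate $|x|^2\le A^{2-k}|x|^k\lesssim A^{2-k}V(x)$ from Assumption~\ref{ass1} yields $\|x_j(\phi_k-\phi_\infty)\|_{L^2(|x|>A)}^2\lesssim A^{2-k}\|V^{1/2}(\phi_k-\phi_\infty)\|_{L^2}^2\lesssim A^{2-k}$, uniformly in $k$. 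Since $2-k<0$, choosing $A$ large first and then $k$ large shows $x_j\phi_k\to x_j\phi_\infty$ in $L^2$, hence $L(\phi_\infty)=l$. (This is precisely the step that fails for the quadratic potential $k=2$.)

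With $\phi_\infty\in\mathcal C_{m,l}$ in hand, I would conclude minimality by weak lower semicontinuity: $\|H^{1/2}\cdot\|_{L^2}^2$ is weakly lower semicontinuous on $\mathcal H^1$, while the nonlinear term is in fact continuous along the sequence because Lemma~\ref{emb} gives strong convergence in $L^{2\sigma+2}$ (note that $2\sigma+2<2d/(d-2)_+$ under Assumption~\ref{ass2}). Hence $E(\phi_\infty)\le\liminf_k E(\phi_k)=e(m,l)$, and since $\phi_\infty\in\mathcal C_{m,l}$ the reverse inequality is automatic, so $E(\phi_\infty)=e(m,l)$.

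Finally, for the Euler--Lagrange equation I would apply the Lagrange multiplier theorem to the $C^1$ constraint map $u\mapsto(M(u),L(u))$. When its differential at $\phi_\infty$ has rank two we directly obtain $\omega,\Omega\in\R$ with $dE(\phi_\infty)=\omega\,dM(\phi_\infty)+\Omega\,dL(\phi_\infty)$, i.e.\ the weak form of \eqref{statnls}; in the degenerate rank-one case $\phi_\infty$ is an eigenfunction of $L_z$, say $L_z\phi_\infty=n\phi_\infty$ with $n\in\Z$, so that the mass multiplier $\omega$ alone gives $H\phi_\infty+\lambda|\phi_\infty|^{2\sigma}\phi_\infty=\omega\phi_\infty$, and since $\omega\phi_\infty=(\omega-\Omega n)\phi_\infty+\Omega L_z\phi_\infty$ for any $\Omega\in\R$, a weak solution of \eqref{statnls} is obtained in all cases. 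The identity \eqref{eq:id} then follows by testing \eqref{statnls} against $\phi_\infty$: using $\langle\phi_\infty,H\phi_\infty\rangle=E(\phi_\infty)-\tfrac{\lambda}{\sigma+1}\|\phi_\infty\|_{L^{2\sigma+2}}^{2\sigma+2}$, together with $M(\phi_\infty)=m$, $L(\phi_\infty)=l$, and collecting the nonlinear terms via $1-\tfrac{1}{\sigma+1}=\tfrac{\sigma}{\sigma+1}$, gives $e(m,l)+\tfrac{\lambda\sigma}{\sigma+1}\|\phi_\infty\|_{L^{2\sigma+2}}^{2\sigma+2}=\omega m+\Omega l$, as claimed.
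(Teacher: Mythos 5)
Your proposal is correct in substance and follows essentially the same route as the paper: the direct method on $\mathcal C_{m,l}$, coercivity via Gagliardo--Nirenberg and Young in the case $\lambda\le 0$, the compact embedding $\mathcal H^1\hookrightarrow L^2$ for the mass constraint, the tail estimate $|x|^2\lesssim |x|^{2-k}V(x)$ for $|x|>R$ (with $k>2$) to pass to the limit in the angular momentum, weak lower semicontinuity (with strong $L^{2\sigma+2}$ convergence handling the sign-indefinite nonlinear term), and Lagrange multipliers plus testing against $\phi_\infty$ for \eqref{eq:id}. The only packaging difference in the key compactness step is that the paper runs a three-$\varepsilon$ argument (strong $L^1$ convergence of $\overline{\phi_{n_j}}\,x_1\partial_{x_2}\phi_{n_j}$ on a large ball plus uniformly small tails), while you prove strong $L^2$ convergence of $x_j\phi_k$ and pair it against weakly convergent gradients; these exploit the identical mechanism.

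Two points in your write-up need repair or comment. First, your claim that $L_z\phi_\infty\in L^2$ is not justified: $\phi_\infty\in\mathcal H^1$ controls $\|\nabla\phi_\infty\|_{L^2}$ and $\|V^{1/2}\phi_\infty\|_{L^2}$ (hence $\|x\phi_\infty\|_{L^2}$), but not the weighted derivative $\|x_j\partial_{x_i}\phi_\infty\|_{L^2}$, and one can build $\phi\in\mathcal H^1$ with $L_z\phi\notin L^2$ (a sum of bumps centered at distance $R_j\to\infty$, oscillating in the angular direction at frequency $N_j\simeq R_j^{k/2}$). The fix is one line and uses your own device: do not move $L_z$ onto $\phi_\infty$, but write the cross term as $\langle\phi_\infty, L_z(\phi_k-\phi_\infty)\rangle=-i\langle x_1\phi_\infty,\partial_{x_2}(\phi_k-\phi_\infty)\rangle+i\langle x_2\phi_\infty,\partial_{x_1}(\phi_k-\phi_\infty)\rangle$, which tends to zero since $x_j\phi_\infty\in L^2$ and $\nabla\phi_k\rightharpoonup\nabla\phi_\infty$ weakly in $L^2$. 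Second, in the degenerate rank-one case your assertion that ``the mass multiplier alone'' yields $H\phi_\infty+\lambda|\phi_\infty|^{2\sigma}\phi_\infty=\omega\phi_\infty$ is true but not automatic: when $dL(\phi_\infty)=n\,dM(\phi_\infty)$, the multiplier theorem only provides a (here vacuous) Fritz John condition, and since $\mathcal C_{m,nm}$ is a proper subset of $\mathcal S_m$, minimality over $\mathcal C_{m,nm}$ does not directly give criticality on $\mathcal S_m$. One genuinely needs an extra argument, e.g.\ constructing admissible curves through $\phi_\infty$ in directions $h$ transverse to the $n$-eigenspace, compensating the $O(\varepsilon^2)$ constraint violations by components in eigenspaces $n_1<n<n_2$, to conclude $dE(\phi_\infty)[h]=0$ there. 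To be fair, the paper's own proof states the Euler--Lagrange relation without addressing this degeneracy at all, so you are, if anything, more attentive on this point; but the assertion as written still requires that supporting construction.
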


\begin{proof}
First we show that 
\[
\inf_{\phi \in \mathcal C_{m, l}} E(\phi)>-\infty.
\] 
For $\lambda>0$, this is clear since $E(u)\ge0$ for all $u\in \mathcal H^1$. In the case $\lambda\leq 0$, we apply the Gagliardo-Nirenberg inequality
\[
\|u\|_{L^{2\sigma+2}}^{2\sigma+2}\leq C\|\nabla u\|_{L^2}^{d\sigma}\|u\|_{L^2}^{\sigma(2-d)+2}, 
\]
together with Young's inequality with 
$$(p,q)=\Big(\frac{2}{d\sigma}, \frac{1}{1-d\sigma/2}\Big)$$ 
to obtain that for any $\varepsilon>0$:
$$E(u)\geq \bigg(\frac{1}{2}+\frac{C\lambda\varepsilon^p}{(\sigma+1)p}\bigg)\big\|\nabla u\big\|_{L^2}^2
+\big\|V^{1/2}u\big\|_{L^2}^2+\frac{C\lambda}{(\sigma+1)\varepsilon^qq}\big\|u\big\|^{(\sigma(2-d)+2)q}_{L^2}.$$ 
Choosing an appropriate $\varepsilon>0$ then yields the lower bound
\[
E(u)\geq \frac{1}{4}\|u\|_{\mathcal{H}^1}^2+C_{m}, 
\] 
where $$C_{m}=\frac{C\lambda}{(\sigma+1)\varepsilon^qq}m^{(\sigma(2-d)+2)q}-\frac{1}{4}m.$$

Next, in view of Lemma \ref{lem:nonempty}, there exists a minimizing sequence $(\phi_{n})_{n\in \N}\subset \mathcal C_{m,l}$. By the norm equivalence, 
    \begin{align*}
    \|\phi_{n}\|_{\mathcal{H}^1}\simeq \|\phi_{n}\|_{H^{1}(\R^d)}+\|V^{1/2}\phi_{n}\|_{L^2(\R^{d})} \ & \lesssim E(\phi_{n})+M(\phi_n) = E(\phi_{n})+m.
    \end{align*}
Since $(\phi_n)_{n \in \N}$ is a minimizing sequence of $E$, we know that $(\phi_{n})_{n\in \N}$ is a bounded sequence in $\mathcal{H}^1$. By the Banach-Alaoglu Theorem, there 
consequently exists a weakly convergent subsequence $(\phi_{n_j})_{j\in \N}$, such that
$$
\phi_{n_{j}}\rightharpoonup \phi_{\infty} \ \text{as } j\to \infty
$$
for some $\phi_\infty \in \mathcal H^1$.
The compact embedding of $\mathcal{H}^1 \hookrightarrow L^2$ 
implies that $\phi_{n_j}\to \phi_{\infty}$ strongly in $L^2$, and thus 
the mass constraint is preserved in the limit, i.e.
\begin{equation}\label{eq:masscon}
\big\|\phi_{\infty}\big\|_{L^2}^2=\lim_{j\to \infty} \big\|\phi_{n_j}\big\|_{L^2}^2=m.
\end{equation}

Next we prove that the same holds true for the angular momentum constraint $l=L(\phi_{n_j})$. 
To this end, we write
\begin{equation*}
	L(\phi_{n_j})=-i \big(\langle \phi_{n_j},x_{1}\partial_{x_2}\phi_{n_j}\rangle_{L^2}-  \langle \phi_{n_j},x_{2}\partial_{x_1}\phi_{n_j}\rangle_{L^2} \big )=:-i(A_j-B_j).
\end{equation*}
For $A_j$, we decompose for some $R>0$ (to be chosen below):
\begin{align*}
	A_j
	&=\int_{\R^d}\overline{\phi_{n_j}}\,x_{1}\partial_{x_2}\phi_{n_j}\, dx\\
	&=\int_{|x|\leq R}\overline{\phi_{n_j}}x_{1}\partial_{x_2}\phi_{n_j}\, dx+\int_{|x|>R}\overline{\phi_{n_j}}x_{1}\partial_{x_2}\phi_{n_j}\, dx\,.
\end{align*}
By Cauchy-Schwarz, we can estimate
\begin{align*}
	\Big | \int_{|x|>R}\overline{\phi_{n_j}}x_{1}\partial_{x_2}\phi_{n_j}\, dx\Big | 
	&\leq \|\nabla \phi_{n_j}\|_{L^2}\Big(\int_{|x|>R}|x|^2|\phi_{n_j}|^2\, dx\Big)^{1/2}\\
	&\lesssim\|\nabla \phi_{n_j}\|_{L^2}\Big(\int_{|x|>R}\frac{V(x)}{|x|^{k-2}}|\phi_{n_{j}}|^2\, dx\Big)^{1/2}\\
	&\leq \frac{1}{R^{\frac{k-2}{2}}}\|\nabla \phi_{n_j}\|_{L^2}\|V^{1/2}\phi_{n_j}\|_{L^2}\\
	&\lesssim \frac{1}{R^{\frac{k-2}{2}}} \|\phi_{n_j}\|_{\mathcal H^1}^2=\mathcal O \left(\frac{1}{R^{\frac{k-2}{2}}}\right).
\end{align*}
Here, the super-quadratic growth of $V$ guarantees that $k-2>0$ and hence the integral can be made arbitrarily small for $R>0$ sufficiently large. 
  
Let $\varepsilon >0$. We thus know that there exists $R_\varepsilon>0$, such that
\begin{equation}\label{eq:est1}
\forall j \in \N \ : \ \Big | \int_{|x|>R_\varepsilon}\overline{\phi_{n_j}}x_{1}\partial_{x_2}\phi_{n_j}\, dx \Big | < \frac{\varepsilon}{3}.
\end{equation}
In addition, the same holds true when we replace $\phi_{n_j}$ by $\phi_{\infty}$ in the 
above estimate.

For $|x|\le R_\varepsilon$, we know that $\phi_{n_j}\to \phi_{\infty}$ strongly in $L^2(B_{R_\varepsilon}(0))$. 
Combining this with the fact that $\partial_{x_2}\phi_{n_j}\rightharpoonup  \partial_{x_2}\phi_{\infty}$ in $L^2$ 
(which is due to the weak convergence $(\phi_{n_j})_{j\in\mathbb N}$ in $H^{1}$) implies 
that 
\[
\overline {\phi_{n_j}}x_{1}\partial_{x_2}\phi_{n_j}\to  \overline {\phi_{\infty}} x_{1}\partial_{x_2}\phi_{\infty} \quad \text{strongly in $L^1(B_{R_\varepsilon}(0))$.} 
\]
Hence, there exists some $N=N_\varepsilon\in \mathbb{N}$, such that for any $j\geq N$,
$$
\Big|\int_{|x|\leq R_\varepsilon}\overline{\phi_{n_j}}x_{1}\partial_{x_2}\phi_{n_j}-\overline{\phi_{\infty}}x_{1}\partial_{x_2}\phi_{\infty}\, dx \Big|< \frac{\varepsilon}{3}\,.
$$ 
Combining this with estimate \eqref{eq:est1} for $\phi_{n_j}$ and for $\phi_\infty$ then yields for $j\geq N_\varepsilon$ :
\begin{equation*}
\Big|\int_{\R^d}\overline{\phi_{n_j}}x_{1}\partial_{x_2}\phi_{n_j}-\overline{\phi_{\infty}}x_{1}\partial_{x_2}\phi_{\infty}\, dx\Big|< \varepsilon,
\end{equation*}
which in turn means that 
\[
\lim_{j\to \infty} \big| A_j - \langle \phi_{\infty},x_{1}\partial_{x_{2}}\phi_{\infty}\rangle_{L^2} \big| =0.
\]

For $B_j$, we can repeat the same steps. In summary this shows that the angular momentum constraint is preserved in the limit, i.e.
\[
l = \lim_{j\to \infty}L(\phi_{n_j})=-i\big(\langle \phi_{\infty},x_{1}\partial_{x_{2}}\phi_{\infty}\rangle_{L^2}  -\langle \phi_{\infty},x_{2}\partial_{x_{1}}\phi_{\infty}\rangle_{L^2}\big)= L(\phi_{\infty}).
\]
Together with \eqref{eq:masscon} this shows that $\phi_\infty \in \mathcal C_{m, l}$. 
Finally, by the weakly lower semicontinuity of $E(\phi)$, which in itself is due to the weakly lower semicontinuity of the $\mathcal H^1$-norm, we have
$$e(m,l)=\inf_{\phi \in \mathcal C_{m, l}} E(\phi) \leq E(\phi_{\infty})\leq \liminf_{j\to \infty}E(\phi_{n_j})=e(m,l).$$ 
This concludes the proof.
\end{proof}

\begin{remark} 
The proof shows the necessity of having a super-quadratic potential $V$, since we require the existence of a $k>2$ to guarantee \eqref{eq:est1}, which in turn 
proves that the angular momentum constraint is preserved in the limit. At this point we do not see how 
to overcome this issue and allow for a confinement of the form $V(x) = |x|^2$. This problem might be related to the fact that the range of $l\mapsto \Omega_l$ is not known 
and thus we cannot impose any smallness assumptions on $|\Omega_l|$ as is done in the case of \eqref{eq:mini1} with quadratic potential.
\end{remark}

We now seek to derive the Lagrange condition for a given minimizer $\phi \in \mathcal C_{m,l}$. Recall that in the case of 
a minimization problem with several constraints, a minimizer $\phi$ is called {\it regular} if the derivatives of the constraining functionals at $\phi$
are {\it linearly independent}. In this case, Lagrange's theorem implies that $\phi$ satisfies the Euler-Lagrange equation 
\begin{equation}\label{Lcond}
dE(\phi) = \omega dM(\phi)+\Omega dL(\phi),
\end{equation}
with some Lagrange multipliers $\omega, \Omega\in \R$. By explicitly computing the variational derivatives,
this shows that $\phi \in \mathcal H^1$ is a weak solution of the stationary equation \eqref{statnls}. 
Multiplying the latter by $\overline \phi$ and integrating over $\R^d$ then directly yields the identity \eqref{eq:id}. 

Concerning the regularity of minimizers we shall first show:

\begin{lemma} \label{lemreg}
If $\frac{l}{m}\notin\mathbb Z$, then all minimizers are regular. In addition, regularity also holds if $\frac{l}{m}=n\in\mathbb Z$ and if the minimizer $\phi$ is not 
an eigenfunction of $L_z$ at eigenvalue $n\in \mathbb Z$. 
\end{lemma}

\begin{proof}
Let $\phi \in \mathcal C_{m,l}$ be a minimizer and assume that there exist 
$\lambda_1, \lambda_2\in\mathbb R$ not both zero, such that
\begin{equation*}\label{E:lindep}
\lambda_1 dM(\phi)+\lambda_2 dL(\phi)=0\,\,\,\Longleftrightarrow\,\,\, \lambda_1 \phi + \lambda_2 L_z \phi=0\,.
\end{equation*}
Note that $\phi$ being a minimizer implies that $M(\phi)=m>0$, and hence $\phi\not=0$. 
This in turn means that $\lambda_2\not=0$, and hence, if the constraints are linearly dependent then $\phi$ satisfies
\begin{equation*}\label{E:eig1}
L_z \phi=-\frac{\lambda_1}{\lambda_2} \phi\,.
\end{equation*}
This is the eigenvalue equation for $L_z$ whose spectrum is given by $n\in \Z$. 
Multiplying by $\overline \phi\in \mathcal C_{m,l}$ and integrating over $\R^d$ therefore yields
$$
l=-\frac{\lambda_1}{\lambda_2}\,m \,\,\,\Longrightarrow\,\,\, 
-\frac{\lambda_1}{\lambda_2}=\frac{l}{m}\, \in \Z.
$$

This shows that a minimizer $\phi\in\mathcal{C}_{m,l}$ is not regular {\it if and only if} there exists $n\in\mathbb Z$
such that $l=nm$ and $\phi \in \mathcal H^1$ is an eigenfunction of $L_z$ at eigenvalue $n$.
\end{proof}

We now focus on the case of non-regular minimizers, i.e. we assume that $\frac{l}{m}=n\in\mathbb Z$ and in addition, that the minimizer, 
which we will now denote by $\phi_n$, lies within the $n$-th eigenspace
$$
\mathcal{L}_n:=\{u\in \mathcal{H}^1:\, L_z u=nu\} .
$$
Observe that any minimizer $\phi_n\in\mathcal{L}_n$ of the doubly constrained problem is also a minimizer of the 
reduced minimization problem
\begin{equation}\label{E:min_nm}
e_{n}(m):=\inf \big\{E(u)\,:\, u\in \mathcal{L}_n\,,\,M(u)=m\big\}\,.
\end{equation}
In particular, the existence result stated in Proposition \ref{grd} ensures that if the minimizer $\phi =\phi_n\in \mathcal L_n$, and thus non-regular, then $e_n(m) = E(\phi_n)$.

In order to better understand the reduced problem, we recall that $L_z=-i\partial_\varphi$ in 
cylindrical coordinates $(r,\varphi,z)\in \R^3$. Thus, $u \in L^2(\R^d)$ is an eigenfunction of $L_z$ corresponding to the eigenvalue $n$, if and only if it is given in the form of a central vortex state, i.e.
\begin{equation}\label{vortex}
u(x,y,z)=f(r,z)e^{in\varphi},
\end{equation}
with $f$ in the following weighted $L^2$-space
$$
f\in L^2_\text{cyl}(\R^2_+) := L^2\big((0,\infty)\times\mathbb R, r dr  dz\big).
$$
In particular, if $u$ is given by \eqref{vortex}, then mass constraint $M(u)  = m$ is equivalent to imposing 
$$
M_\text{cyl}(f):=2\pi\int_\mathbb R\int_0^\infty \big|f(r,z)\big|^2\,rdrdz\, = m.
$$
Furthermore, for any $u$ given by \eqref{vortex} it holds:
$$
\left[\begin{matrix}
\partial_x u \\ \partial_y u
\end{matrix}\right]=e^{in\varphi}
\left[\begin{matrix}
\cos\varphi & -\sin\varphi\\
\sin\varphi & \cos\varphi
\end{matrix}\right] \,
\left[\begin{matrix}
\partial_r f \\ \tfrac{in}{r} f
\end{matrix}\right],
$$
and hence
$$
\iiint_{\mathbb{R}^3} \big|\nabla u\big|^2\,dx\, dy\, dz= 2\pi \int_\mathbb{R} \int_0^\infty 
\left(\big|\nabla_{r,z}f\big|^2+\frac{n^2}{r^2}|f|^2\right)\,rdr dz\,.
$$
This shows that a finite energy states $u\in \mathcal H^1$ is of the form \eqref{vortex}, i.e. $u \in \mathcal L_n$, if and only if
$$
f\in \mathcal H^1_\text{\rm cyl}:=\Big\{f\in L^2_{\rm cyl}(\R^2_+)\,:\,\nabla_{r,z} f,\ \tfrac{1}{r}f,\ V^{1/2}f \in L^2_\text{cyl}(\R^2_+)\Big\}.
$$
Using this, the reduced minimization problem \eqref{E:min_nm} can be equivalently written in the form 
\begin{equation}\label{fmin}
e_n(m)=\inf\big\{E_n(f)\,:\, f\in\mathcal H^1_\text{cyl}\,,\, M_\text{cyl}(f)=m\big\}\,,
\end{equation}
where the total energy of $f\in\mathcal H^1_\text{cyl}$ is
$$
E_n(f)=2\pi\int_\mathbb R\int_0^\infty \left(\frac12 \big|\nabla_{r,z} f\big|^2+\frac{n^2}{2r^2}\,\big|f\big|^2+V\big|f\big|^2
+\frac{\lambda}{\sigma+1} \big|f\big|^{2\sigma+2}\right)\,rdrdz\, .
$$
It then follows that any minimizer $f$ of \eqref{fmin} satisfies the associated Euler-Lagrange equation, i.e.
\[
dE_{n}(f)=\omega \, dM_{\rm cyl}(f)
\]
where $\omega\in \mathbb R$ is the Lagrange multiplier corresponding to the mass constraint. Note that $\omega$ implicitly depends on the choice of $n\in \Z$ within the energy $E_n(f)$.
Explicitly, the minimizer $f$ solves the following equation
\begin{equation}\label{E:ELcyl}
    -\frac{1}{2}\nabla_{r,z}^2f+\frac{n^2}{2r^2}f+V(r,z)f+{\lambda}|f|^{2\sigma}f=\omega f\,,
\end{equation}
weakly in $\mathcal H^1_\text{cyl}$.
Multiplying by $e^{in \varphi}$ and converting back to Cartesian coordinates then shows that 
any non-regular (doubly constrained) minimizer $\phi_n \in \mathcal C_{m,nm}$ of the form 
\[
\phi_n(x,y,z)=f(r,z)e^{in\varphi}
\] 
solves
\begin{equation}\label{eq}
     -\frac{1}{2}\Delta \phi_n+V \phi _n+{\lambda}| \phi_n|^{2\sigma}\phi_n=\omega \phi_n\,,
\end{equation}
weakly in $\mathcal L_n$. 
Finally, by using the orthogonality of $\mathcal L_k$ and $\mathcal L_n$ for $k\not=n$, together with the gauge invariance of the nonlinearity, 
we infer that \eqref{eq} also holds weakly in $\mathcal H^1$. Hence, a non-regular minimizer $\phi_n\in \mathcal L_n$ 
indeed also satisfies the Lagrange condition \eqref{Lcond} but with $\Omega = 0$.

In summary, we have proved the following:
\begin{proposition}
    If $\phi_n\in \mathcal L_n$ with $n\not=0$ is a minimizer of 
    \[
    e(m, nm):=\inf\{E(u)\,:\, u\in\mathcal H^1,\, M(u)=m,\, L(u)= n m \},
    \]
    then $\phi_n$ satisfies the Lagrange condition \eqref{Lcond} with $\Omega =0$. In this case $\phi_n\in \mathcal L_n$ yields as a 
    nonlinear central vortex state solution to \eqref{eq}.
\end{proposition}

We reiterate that in our case, the existence of a minimizing profile $f$ to \eqref{fmin} follows from the 
existence of a doubly constraint minimizer $\phi \in \mathcal C_{m,l}$ with $l=nm$ (as stated in Proposition \ref{grd}), provided that 
$\phi = \phi_n \in \mathcal L_n$ and hence, non regular. By contrast, the mere existence of a minimizing profile $f$ 
(which can be proved independently, cf. \cite{ArSp}) only implies that the corresponding $\phi_n \in \mathcal L_n$ is a minimizer of the reduced 
problem \eqref{E:min_nm}, but not necessarily of the doubly constraint problem. Whether or not the doubly constraint problem indeed admits 
non-regular minimizers $\phi = \phi_n \in \mathcal L_n$ remains an open problem.


\section{Properties of minimizers}\label{sec:prop}

Having shown existence of a (doubly constrained) minimizer, we can now study some of its properties. 
First we shall prove the following orbital stability result:

\begin{proposition}\label{prop:stab}
	Let Assumptions \ref{ass1} and \ref{ass2} hold. Then the set of bound states $\mathcal B_{m,l}$ is orbitally stable under the flow of \eqref{vnls}. That is,
	for all $\varepsilon>0$ there exists $\delta=\delta(\varepsilon)>0$ such that if 
	$$\inf_{\phi\in \mathcal B_{m,l}}\|u_{0}-\phi\|_{\mathcal{H}^1}<\delta,$$
	then the solution $v\in C(\R,\mathcal{H}^1)$ to \eqref{vnls} with $v(0,x)=u_{0}\in \mathcal{H}^1$ satisfies 
	$$\sup_{t\in \R}\inf_{\phi\in \mathcal B_{m,l}}\|v(t,\cdot)-\phi\|_{\mathcal{H}^1}< \varepsilon.$$
\end{proposition}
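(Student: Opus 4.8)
The plan is to prove orbital stability by a standard Cazenave--Lions contradiction argument, whose only non-trivial ingredient is a strong compactness property for sequences that are asymptotically minimizing with asymptotically correct constraints. Concretely, I would first isolate the following claim: if $(w_n)\subset \mathcal{H}^1$ satisfies $M(w_n)\to m$, $L(w_n)\to l$ and $E(w_n)\to e(m,l)$, then a subsequence of $(w_n)$ converges \emph{strongly} in $\mathcal{H}^1$ to some $\phi\in\mathcal B_{m,l}$. The crucial feature is that the constraints are required to hold only in the limit; this is exactly what lets one handle initial data $u_0$ lying near, but not exactly on, the constraint set $\mathcal C_{m,l}$, and it avoids any delicate projection back onto the codimension-two manifold $\mathcal C_{m,l}$ (which would degenerate precisely at the central-vortex states where $L_z\phi=n\phi$ and $dL(\phi)$ is parallel to $dM(\phi)$).

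To prove the claim I would re-run the argument of Proposition \ref{grd}. Coercivity of $E$ (the lower bound $E(u)\ge\tfrac14\|u\|_{\mathcal{H}^1}^2+C_{M(u)}$ established there, with $C_{M(w_n)}$ bounded since $M(w_n)$ is) shows $(w_n)$ is bounded in $\mathcal{H}^1$; hence $w_n\rightharpoonup\phi$ weakly along a subsequence. The compact embeddings $\mathcal{H}^1\hookrightarrow L^2$ and $\mathcal{H}^1\hookrightarrow L^{2\sigma+2}$ from Lemma \ref{emb} (applicable since Assumption \ref{ass2} forces $\sigma<\frac{2}{(d-2)_+}$) give strong convergence in $L^2$ and $L^{2\sigma+2}$, so $M(\phi)=\lim M(w_n)=m$ and the nonlinear term converges. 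The identity $L(\phi)=\lim L(w_n)=l$ follows verbatim from the tail estimate \eqref{eq:est1}, which again uses only boundedness in $\mathcal{H}^1$ and the super-quadratic growth $k>2$. Weak lower semicontinuity of $\|H^{1/2}\cdot\|^2$ then yields $E(\phi)\le\liminf E(w_n)=e(m,l)$, while $\phi\in\mathcal C_{m,l}$ forces $E(\phi)\ge e(m,l)$; hence $\phi\in\mathcal B_{m,l}$. Finally, combining $E(w_n)\to E(\phi)$ with the convergence of the nonlinear term gives $\|H^{1/2}w_n\|^2\to\|H^{1/2}\phi\|^2$, and together with $\|w_n\|_{L^2}^2\to\|\phi\|_{L^2}^2$ this produces $\|w_n\|_{\mathcal{H}^1}\to\|\phi\|_{\mathcal{H}^1}$. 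In the Hilbert space $\mathcal{H}^1$, weak convergence plus norm convergence upgrades to strong convergence, which proves the claim.

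With the claim in hand I argue by contradiction. If $\mathcal B_{m,l}$ were not orbitally stable there would exist $\varepsilon_0>0$, data $u_0^{(n)}$ with $\inf_{\phi\in\mathcal B_{m,l}}\|u_0^{(n)}-\phi\|_{\mathcal{H}^1}\to0$, and times $t_n$ such that the solutions $v_n$ of \eqref{vnls} satisfy $\inf_{\phi\in\mathcal B_{m,l}}\|v_n(t_n,\cdot)-\phi\|_{\mathcal{H}^1}\ge\varepsilon_0$. Choosing $\phi_n\in\mathcal B_{m,l}$ with $\|u_0^{(n)}-\phi_n\|_{\mathcal{H}^1}\to0$ and using that $M,L,E$ are locally uniformly continuous on $\mathcal{H}^1$ while $M(\phi_n)=m$, $L(\phi_n)=l$, $E(\phi_n)=e(m,l)$, one gets $M(u_0^{(n)})\to m$, $L(u_0^{(n)})\to l$, $E(u_0^{(n)})\to e(m,l)$. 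Since $v_n$ is the rotation \eqref{eq:vu} of a solution of \eqref{nls}, and $M,L,E$ are invariant under $e^{it\Omega L_z}$ (because $V$ is radial) and conserved along \eqref{nls} by Proposition \ref{prop:GWP}, all three are conserved along the flow of \eqref{vnls}. Thus $w_n:=v_n(t_n,\cdot)$ satisfies $M(w_n)\to m$, $L(w_n)\to l$, $E(w_n)\to e(m,l)$, so by the claim a subsequence converges strongly in $\mathcal{H}^1$ to some $\phi\in\mathcal B_{m,l}$, forcing $\inf_{\phi\in\mathcal B_{m,l}}\|w_n-\phi\|_{\mathcal{H}^1}\to0$ and contradicting the bound $\ge\varepsilon_0$.

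The main obstacle is the compactness claim, and within it the upgrade from weak to strong $\mathcal{H}^1$ convergence: weak convergence alone only preserves the constraints and yields the energy inequality (as in Proposition \ref{grd}), which is enough for existence but not for stability. The strong convergence hinges on the \emph{compactness} of the embedding into $L^{2\sigma+2}$, which renders the nonlinear part of the energy continuous and lets the energy identity pin down the limit of the $\mathcal{H}^1$-norm. The other delicate point is the angular-momentum preservation, but this is entirely inherited from \eqref{eq:est1} and its reliance on $k>2$, so no new idea beyond Proposition \ref{grd} is required there.
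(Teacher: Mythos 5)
Your proposal is correct and follows essentially the same route as the paper's proof: a contradiction argument reduced to strong $\mathcal{H}^1$-compactness of sequences whose mass, angular momentum, and energy are only asymptotically equal to $m$, $l$, and $e(m,l)$, established via coercivity, weak convergence, the compact embedding of Lemma \ref{emb}, the tail estimate \eqref{eq:est1} for preserving the angular momentum constraint, and weak lower semicontinuity of $E$. The differences are purely expository: you isolate the compactness statement as a standalone claim, spell out the weak-to-strong upgrade (energy identity plus compactness of the $L^{2\sigma+2}$ embedding pinning down the limit of the $\mathcal{H}^1$-norm) which the paper asserts more tersely, and correctly allow the comparison minimizers $\phi_n$ to vary along the sequence rather than fixing a single $\phi_0$.
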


\begin{proof}
	Suppose by contradiction that there exists a sequence 
	$(u_{0,n})_{n\in\mathbb N} \subset \mathcal{H}^{1}(\R^d)$, 
	a function $\phi_{0}\in \mathcal B_{m,l}$, a sequence of times $(t_{n})_{n\in \mathbb{N}}\subset \R$, and a constant $\varepsilon_0>0$ such that 
	\begin{equation}\label{orb}
		\lim_{n\to \infty}\|u_{0,n}-\phi_{0}\|_{\mathcal{H}^1}=0
	\end{equation}
	and 
	\begin{equation}\label{orbcon}
		\inf_{\phi\in \mathcal B_{m,l}}\|v_{n}(t_{n},\cdot)-\phi\|_{\mathcal{H}^1}> \varepsilon_0\quad\text{for all }n\geq 1\,.
	\end{equation}
Here $v_{n}\in C(\R,\mathcal{H}^1)$ is the unique global solution to \eqref{vnls} with initial data $u_{0,n}$. The strong convergence \eqref{orb} implies that
	$$\lim_{n\to \infty}M(u_{0,n})=M(\phi_{0}), \quad \lim_{n\to \infty}E(u_{0,n})=E(\phi_{0}),$$
	and the convergence of $L(u_{0,n})$ can be deduced similarly to the proof of Proposition~\ref{grd}.
	Indeed,
	\begin{align*}
		&|L(u_{0,n})-L(\phi_{0})|=\\
		&=\Big|\int_{\R^d}\!\!\!\overline{u_{0,n}}x_{1}\partial_{x_{2}}u_{0,n}-\overline{\phi_{0}}x_{1}\partial_{x_2}\phi_{0}\,dx-\int_{\R^d}\!\!\!\overline{u_{0,n}}x_{2}\partial_{x_{1}}u_{0,n}-\phi_{0}x_{2}\partial_{x_{1}}\phi_{0}\,dx\Big|\\
		&=|A_n-B_n|\leq |A_n|+|B_n|.
	\end{align*}
    For $|A_n|$, we can write
    \begin{align*}
    	|A_n|\leq&\Big|\int_{|x|\leq R}x_{1}(\overline{u_{0,n}}\partial_{x_2}u_{0,n}-\overline{\phi_{0}}\partial_{x_2}\phi_{0})\,dx\Big|\\
    	&\, + \Big|\int_{|x|>R}\overline{u_{0,n}}x_{1}\partial_{x_{2}}u_{0,n}\,dx\Big|+\Big|\int_{|x|>R}\overline{\phi_0}x_{1}\partial_{x_{2}}\phi_0\,dx\Big|.
    \end{align*}
    Now following the same steps as in the proof of Proposition \ref{grd}, we know that for any
    $\varepsilon>0$ there exists $R_\varepsilon$ sufficiently large such that
 \[
 \Big|\int_{|x|>R_\varepsilon}\overline{u_{0,n}}x_{1}\partial_{x_{2}}u_{0,n}\,dx\Big|< 
 \frac{\varepsilon}{3}\quad\text{and}\quad\Big|\int_{|x|>R_\varepsilon}\overline{\phi_0}x_{1}\partial_{x_{2}}\phi_0\,dx\Big|< \frac{\varepsilon}{3}.
 \]
The strong convergence in $\mathcal H^{1}$ implies the strong convergence in $L^2$ and $H^1$, and hence we have $\overline{u_{0,n}}\partial_{x_2}u_{0,n}\to \overline{\phi_0}\partial_{x_2}\phi_{0,n}$ strongly in $L_1$. We can then find $N_\varepsilon>0$ such that for any $n>N_\varepsilon$,
\[
\Big|\int_{|x|\leq R_\varepsilon}x_{1}(\overline{u_{0,n}}\partial_{x_2}u_{0,n}-\overline{\phi_{0}}\partial_{x_2}\phi_{0})\,dx\Big|< \frac{\varepsilon}{3}\,.
\]
Hence we have $\lim_{n\to \infty}|A_n|=0$, and similarly $\lim_{n\to\infty}|B_n|=0$, which together imply that 
\[
\lim_{n\to \infty}L(u_{0,n})=L(\phi_{0})=l\,.
\]
By mass and energy conservation,
$$\lim_{n\to \infty}M(v_{n}(t_{n}, \cdot))=M(\phi_{0}), \quad \lim_{n\to \infty}E(v_{n}(t_n, \cdot))=E(\phi_{0})\,,$$ 
and since $V$ is radially symmetric, 
$$\lim_{n\to \infty}L(v_{n}(t_n, \cdot))=L(\phi_{0})=l.$$
Furthermore, since 
\begin{align*}
\|v_{n}(t_n)\|_{\mathcal{H}^1}&\simeq \|v_{n}(t_n)\|_{L^2}+\|H^{1/2}v_{n}(t_n)\|_{L^2}\\
&\lesssim m+E(v_{n}(t_n)),
\end{align*}	
we see that $\big(v_{n}(t_n)\big)_{n\in\mathbb N}$ is a bounded sequence in $\mathcal{H}^1$. 
Hence there exists a weakly convergent subsequence $\big(v_{n_{j}}(t_{n_j})\big)_{j\in\mathbb N}$, 
such that $v_{n_{j}}(t_{n_{j}})\rightharpoonup v_{\infty}\in \mathcal{H}^1$. With the compact embedding result in Lemma \ref{emb} and following the proof of Proposition \ref{grd}, 
$$M(v_{\infty})=\|\psi_{\infty}\|_{L^2}^2=\lim_{j\to\infty}\|v_{n_{j}}(t_{n_{j}}, \cdot)\|_{L^2}^2=m,$$
as well as
$$L(v_{\infty})=\lim_{j\to \infty}L(v_{n_{j}}(t_{n_{j}}, \cdot))=l.$$
By the weakly lower semicontinuity of $E$,
$$\inf_{v\in \mathcal C_{m,l}}E(v)\leq E(v_{\infty})\leq \liminf_{j\to \infty}E(v_{n_{j}}(t_{n_{j}}, \cdot))=\inf_{v\in \mathcal C_{m,l}}E(v).$$
These show that $v_{\infty}\in \mathcal  B_{m,l}$ and $\big(v_{n_{j}}(t_{n_{j}}, \cdot)\big)_{j\in\mathbb N}$ converges strongly to $v_{\infty}$ in $\mathcal{H}^1$. Hence 
$$\inf_{\phi\in \mathcal B_{m,l}}\|v_{n_{j}}(t_{n_{j}})-\phi\|_{\mathcal{H}^1}\leq \|v_{n_{j}}(t_{n_{j}})-v_{\infty}\|_{\mathcal{H}^1}\xrightarrow{j\to \infty} 0,$$
which contradicts \eqref{orbcon}.	
\end{proof}

\begin{remark}
Note that the compact embedding $\mathcal{H}^1\hookrightarrow L^2(\R^d)$ directly implies that the mass constraint is preserved. Thus, in contrast to other results, we do not need to argue that 
$v_{n}(t_n, \cdot)$ can be renormalized to become a minimizing sequence satisfying the constraints. This is important here, since the mass $M(v)$ 
and the mean angular momentum $L(v)$ cannot be renormalized independently. 
\end{remark}

\begin{proof}[Proof of Theorem \ref{thm:stab}] The orbital stability result for solutions $v$ to \eqref{vnls} 
transfers to solutions $u$ of the original NLS \eqref{nls} via the unitary transformation 
\[
e^{-it \Omega  L_z }  :\mathcal H^1 \to \mathcal H^1, \quad v(t,x)\mapsto e^{-it \Omega  L_z }  v(x)\equiv u(t,x).
\]
Since this transformation preserves the physical conservation laws of mass, energy and angular momentum, we consequently have that the set 
\[
\{ u_{\rm rot}(t) \} \equiv \big\{ e^{-it \Omega  L_z }  \phi \, : \, \phi\in \mathcal B_{m,l}\ \text{and $t\in \R$} \big\}
\]
is orbitally stable under the flow of \eqref{nls}. 
\end{proof}

Next, we prove that if the angular momentum 
is chosen to be $l=0$, the doubly constrained minimizer is equivalent to the one obtained 
by imposing only a single mass constraint. 

\begin{lemma} \label{lem:zero} Let Assumptions \ref{ass1} and \ref{ass2} hold.
	For any $m>0$  it holds that
	$$e(m, 0) \equiv \inf_{u \in \mathcal C_{m, 0}}E(u)=\inf_{u \in \mathcal S_m}E(u).$$ 
In addition, the infimum is achieved for radially symmetric functions $\phi = \phi^*(|x|)$.	
\end{lemma}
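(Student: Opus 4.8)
The plan is to establish the two inequalities separately and reduce everything to a symmetrization argument. Since every radial function lies in the kernel of $L_z$, one has $\{u\in\mathcal S_m : u=u(|x|)\}\subseteq \mathcal C_{m,0}\subseteq \mathcal S_m$. As the infimum over a larger set is smaller, the inclusion $\mathcal C_{m,0}\subseteq\mathcal S_m$ immediately gives
\[
\inf_{u\in\mathcal S_m}E(u)\le \inf_{u\in\mathcal C_{m,0}}E(u)=e(m,0).
\]
For the reverse inequality it suffices to produce, for every $u\in\mathcal S_m$, a radial competitor $w\in\mathcal C_{m,0}$ with $E(w)\le E(u)$; taking the infimum over $u\in\mathcal S_m$ then yields $e(m,0)\le \inf_{u\in\mathcal S_m}E(u)$, and hence equality.

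The natural choice is the Schwarz symmetrization (symmetric-decreasing rearrangement) $u^{\ast}$ of $|u|$. I would first recall its standard properties: $u^{\ast}$ is radial and equimeasurable with $|u|$, so that $\|u^{\ast}\|_{L^p}=\|u\|_{L^p}$ for every $p$; in particular $M(u^{\ast})=M(u)=m$ and $\|u^{\ast}\|_{L^{2\sigma+2}}=\|u\|_{L^{2\sigma+2}}$. Being radial, $L_z u^{\ast}\equiv 0$, whence $L(u^{\ast})=0$ and $u^{\ast}\in\mathcal C_{m,0}$. For the energy I would compare the three contributions term by term: the P\'olya--Szeg\H{o} inequality gives $\|\nabla u^{\ast}\|_{L^2}\le\|\nabla u\|_{L^2}$; the nonlinear term is \emph{unchanged} by equimeasurability, irrespective of the sign of $\lambda$; and the potential term satisfies $\int_{\R^d}V|u^{\ast}|^2\,dx\le \int_{\R^d}V|u|^2\,dx$ by the Hardy--Littlewood (bathtub) inequality. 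Adding these up yields $E(u^{\ast})\le E(u)$, and the same three bounds certify $u^{\ast}\in\mathcal H^1$. This completes the reverse inequality and proves $e(m,0)=\inf_{u\in\mathcal S_m}E(u)$.

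Existence of a radial minimizer is then immediate and requires no new compactness input. Proposition~\ref{grd}, applied with $l=0$, provides some $\phi_{\infty}\in\mathcal C_{m,0}$ with $E(\phi_{\infty})=e(m,0)$. Its symmetrization $\phi^{\ast}:=\phi_{\infty}^{\ast}$ is radial and lies in $\mathcal C_{m,0}$, and by the inequality just established $E(\phi^{\ast})\le E(\phi_{\infty})=e(m,0)\le E(\phi^{\ast})$, forcing equality throughout. Thus $\phi^{\ast}=\phi^{\ast}(|x|)$ is a radial element realizing the infimum, as claimed.

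The delicate point — the only genuine obstacle — is the potential comparison $\int V|u^{\ast}|^2\le \int V|u|^2$. This rearrangement inequality demands that $V$ be radially non-decreasing, so that its symmetric-increasing rearrangement coincides with $V$ itself; concentrating mass where $V$ is smallest then lowers the potential energy. Under Assumption~\ref{ass1}, the lower bound $|\nabla V|\gtrsim\langle x\rangle^{k-1}>0$ together with $V\to+\infty$ forces $V'(r)$ to keep a constant (positive) sign for $|x|>R$, so $V$ is strictly increasing there, and for the model potential $V(x)=|x|^{k}$ monotonicity holds globally; hence the hypothesis is satisfied in the cases of interest. By contrast, the kinetic and nonlinear terms pose no difficulty, the former by P\'olya--Szeg\H{o} and the latter because symmetrization preserves every $L^p$ norm exactly.
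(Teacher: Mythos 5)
Your overall architecture is close in spirit to the paper's: the paper also gets the easy inequality from the inclusion $\mathcal C_{m,0}\subset\mathcal S_m$, and also obtains the reverse inequality from a radially symmetric competitor. The difference is that the paper invokes the literature for the existence of a mass-constrained minimizer over $\mathcal S_m$, argues it "can be chosen radial" by rearrangement, and then observes $L(\phi^*)=0$, whereas you symmetrize \emph{every} competitor and recover existence from Proposition~\ref{grd} with $l=0$ --- a nice, more self-contained reorganization. The genuine gap sits exactly at the point you yourself call delicate, and your attempted repair does not close it. The bathtub/Hardy--Littlewood step $\int_{\R^d} V|u^*|^2\,dx\le\int_{\R^d} V|u|^2\,dx$ requires $V$ to be radially non-decreasing on \emph{all} of $\R^d$ (equivalently, every sublevel set $\{V\le s\}$ must be a ball). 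Assumption~\ref{ass1} controls $V$ only for $|x|>R$: your argument that $V'(r)>0$ there is fine, but inside $B_R$ the assumption imposes nothing beyond smoothness and (after normalization) non-negativity. A radial ``Mexican hat'' with a strict local maximum at the origin and its minimum on an annulus $\{|x|=r_0\}\subset B_R$ is admissible; for $u$ concentrated on that annulus, $u^*$ piles the mass at the origin where $V$ is larger, so $\int V|u^*|^2>\int V|u|^2$ and your claimed inequality $E(u^*)\le E(u)$ fails. Saying the hypothesis holds ``in the cases of interest'' is not a proof of the lemma, which quantifies over every $V$ satisfying Assumption~\ref{ass1}. (In fairness, the paper's proof leans on the same unstated monotonicity when it symmetrizes, hiding it in the citations; and for $\lambda<0$ with such a non-monotone $V$, radial symmetry of minimizers can genuinely break, so the ``in addition'' clause really does need something beyond Assumption~\ref{ass1}.)

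There is, however, a cheap repair of the \emph{equality} statement that needs no rearrangement and no monotonicity of $V$: replace $u$ by $|u|$ rather than by $u^*$. The diamagnetic inequality gives $\|\nabla|u|\|_{L^2}\le\|\nabla u\|_{L^2}$, while the mass, the potential term, and the $L^{2\sigma+2}$ term are unchanged, so $E(|u|)\le E(u)$; and $L(|u|)=0$ because for any real-valued $w\in\mathcal H^1$,
\[
\langle w, L_z w\rangle=-i\int_{\R^d} w\,\big(x_1\partial_{x_2}w-x_2\partial_{x_1}w\big)\,dx
=-\tfrac{i}{2}\int_{\R^d}\big(x_1\partial_{x_2}-x_2\partial_{x_1}\big)(w^2)\,dx=0
\]
by integration by parts. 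Hence $|u|\in\mathcal C_{m,0}$ and $e(m,0)\le E(|u|)\le E(u)$ for every $u\in\mathcal S_m$, which yields $e(m,0)=\inf_{u\in\mathcal S_m}E(u)$ under Assumption~\ref{ass1} alone. Your symmetrization then remains the right tool for producing a \emph{radial} minimizer, but that step should either be run under an explicit additional hypothesis that $V$ is radially non-decreasing, or be replaced, as in the paper, by an appeal to the literature for the existence of a radial mass-constrained minimizer.
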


\begin{proof}
Clearly, it is always true that 
\begin{equation}\label{eq:inf1}
\inf_{u\in \mathcal C_{m, 0}}E(u)\geq \inf_{u \in \mathcal S_m}E(u)\,.
\end{equation}  
The existence of an energy minimizer subject to a mass constrained has been proven in various contexts (and with various conditions on the potential and the nonlinearity), 
see, e.g. \cite{HHMT, HS, JL, LRY, ZJ1}. Thus, there exists $\phi^*\in \mathcal H^1$, such that 
$$
E(\phi^*)=\inf_{u\in \mathcal S_m}E(u)\,.
$$
Since $V$ is radially symmetric, a symmetric decreasing rearrangement 
implies that the minimizer $\phi^*$ can be chosen to be a radial function (cf. \cite{LL, LRY}), and hence is unique up to
phase conjugations $\phi^*\mapsto e^{i \theta}\phi^*$, $\theta \in \R$. In this case $L(\phi^*)= 0$, which in turn implies 
that $\phi^*\in\mathcal C_{m,0}$ and hence
$$
E(\phi^*)\geq \inf_{u\in \mathcal C_{m, 0}}E(u)\,.
$$ 
Combined with \eqref{eq:inf1} this yields the result. 
\end{proof}

\begin{remark}
In this proof we use the fact that $V$ is radially symmetric in order to conclude that the 
minimizer is radially symmetric too. Presumably the statement can be generalized to the case of 
merely axis-symmetric potentials $V$, satisfying $L_zV=0$. In this case, the minimizer $\phi^*$ is expected to be also axis-symmetric. Unfortunately, we could not find a reference which guarantees 
the existence (and uniqueness) of such minimizers, which is 
why we stated the result under the more restrictive condition of radially symmetric $V$.
\end{remark}

\begin{proposition}\label{prop:connect}
Let Assumptions \ref{ass1} and \ref{ass2} hold. Then we have the following properties:
\begin{itemize}
\item[(i)] For any $m>0$ and $l,\Omega\in\mathbb R$
\begin{equation}\label{eq:e_symm}
e(m,l)=e(m,-l)  \ge e_\Omega(m) +  |\Omega l |\,.
\end{equation}
\item[(ii)] Given $\Omega\in\mathbb R$ and $m>0$, let $\phi_\Omega\in\mathcal S_m$ be a minimizer such that
$E_\Omega(\phi_\Omega) = e_\Omega(m)$, and let $l_\Omega=L(\phi_\Omega)$. Then $E(\phi_\Omega)=e(m,l_\Omega)$, 
and the minima satisfy
\begin{equation}\label{eq:connect}
e(m,l_\Omega)=e_\Omega(m)+\Omega l_\Omega \quad\text{and}\quad \Omega l_\Omega\ge 0\,.
\end{equation}
\end{itemize}
\end{proposition}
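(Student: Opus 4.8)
The plan is to derive everything from two elementary facts: a conjugation symmetry of the doubly constrained problem, and the observation that any function admissible for $e(m,l)$ is also admissible (with only the mass constraint retained) for $e_\Omega(m)$. Since existence of minimizers for both problems is already available — Proposition \ref{grd} for $e(m,l)$, and the cited works \cite{RS1, RS2} together with Lemma \ref{lem:zero} for $e_\Omega(m)$ — no new compactness argument is needed, and the proof reduces to bookkeeping with these two facts.

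First I would establish the symmetry $e(m,l)=e(m,-l)$ in (i). The point is that complex conjugation $\phi\mapsto\overline\phi$ leaves both $M$ and $E$ invariant, since all three terms in \eqref{energy} depend only on $|\phi|$ and on $|\nabla\phi|=|\nabla\overline\phi|$, while it reverses the sign of the angular momentum: using $L_z\overline\phi=-\overline{L_z\phi}$ and the reality of $L(\phi)$ one finds $L(\overline\phi)=-\overline{L(\phi)}=-L(\phi)$. Thus $\phi\mapsto\overline\phi$ is an energy-preserving bijection of $\mathcal C_{m,l}$ onto $\mathcal C_{m,-l}$, giving the claimed symmetry. For the inequality in (i), I would take a minimizer $\phi\in\mathcal C_{m,l}$ and use it as a trial function for $e_\Omega$: since $M(\phi)=m$ we have $\phi\in\mathcal S_m$, whence $e_\Omega(m)\le E_\Omega(\phi)=E(\phi)-\Omega L(\phi)=e(m,l)-\Omega l$, i.e. $e(m,l)\ge e_\Omega(m)+\Omega l$. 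Running the same argument on a minimizer in $\mathcal C_{m,-l}$ and invoking the symmetry yields $e(m,l)\ge e_\Omega(m)-\Omega l$; taking the larger lower bound produces $e(m,l)\ge e_\Omega(m)+|\Omega l|$.

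For (ii) the strategy is a squeezing argument. Since $\phi_\Omega\in\mathcal S_m$ with $L(\phi_\Omega)=l_\Omega$, it lies in $\mathcal C_{m,l_\Omega}$, so $E(\phi_\Omega)\ge e(m,l_\Omega)$. A direct computation gives $E(\phi_\Omega)=E_\Omega(\phi_\Omega)+\Omega L(\phi_\Omega)=e_\Omega(m)+\Omega l_\Omega$, while part (i) supplies $e(m,l_\Omega)\ge e_\Omega(m)+|\Omega l_\Omega|\ge e_\Omega(m)+\Omega l_\Omega$. Chaining these,
\[
E(\phi_\Omega)\ge e(m,l_\Omega)\ge e_\Omega(m)+\Omega l_\Omega=E(\phi_\Omega),
\]
forces all inequalities to be equalities, which gives $E(\phi_\Omega)=e(m,l_\Omega)=e_\Omega(m)+\Omega l_\Omega$. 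Comparing this equality with the absolute-value bound from (i) then yields $e_\Omega(m)+\Omega l_\Omega=e(m,l_\Omega)\ge e_\Omega(m)+|\Omega l_\Omega|$, so $\Omega l_\Omega\ge|\Omega l_\Omega|$, forcing $\Omega l_\Omega\ge 0$.

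I do not expect a genuine analytic obstacle, as the difficult existence and compactness issues have been dispatched in the previous section. The only steps requiring care are the sign bookkeeping in the conjugation identity $L(\overline\phi)=-L(\phi)$, and the recognition that the absolute value in (i) — obtained precisely by exploiting the symmetry for both signs of $l$ — is exactly the ingredient needed at the end to pin down the sign of $\Omega l_\Omega$. Everything else is a matter of threading the inequalities in the correct order.
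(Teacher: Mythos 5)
Your proof is correct and follows essentially the same route as the paper: the symmetry $e(m,l)=e(m,-l)$ plus the observation that $E_\Omega = E - \Omega l$ on $\mathcal C_{m,l}\subset\mathcal S_m$, then a squeeze of inequalities around the minimizer $\phi_\Omega$. The only (cosmetic) difference is that you realize the $l\mapsto -l$ symmetry via complex conjugation $\phi\mapsto\overline\phi$, whereas the paper uses the spatial reflection $u(x_1,x_2,z)\mapsto u(-x_1,x_2,z)$; both transformations preserve $M$ and $E$, flip the sign of $L$, and satisfy $E_\Omega(\tilde u)=E_{-\Omega}(u)$, so they are interchangeable here and in the subsequent reduction of Theorem~\ref{thm:connect} to $\Omega\ge 0$.
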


Note that if one takes $\Omega=0$ in (ii), we already know that the minimizer is radially symmetric and hence $l_\Omega = 0$ in this case. 
In turn this implies that \eqref{eq:connect} simplifies to the statement of Lemma \ref{lem:zero}.

\begin{proof}
First consider the transformation $\mathcal{H}^1 \ni u\mapsto \tilde u\in \mathcal{H}^1$ given by 
\begin{equation*}
\tilde u(x_1,x_2,z)=u(-x_1,x_2,z)\,,
\end{equation*}
and observe that
\begin{equation*}
M(\tilde u)=M(u)\,,\quad E(\tilde u)=E(u)\,,\quad L(\tilde u)=-L(u)\,,\quad\text{and}\quad E_\Omega(\tilde u)=E_{-\Omega}(u)\,.
\end{equation*}
These properties directly imply that $u\in\mathcal C_{m,l}$ if and only if 
$\tilde u\in \mathcal C_{m,-l}$ and so
\begin{equation}\label{eq:emlsymm}
e(m,l)=e(m,-l)\quad\text{for any } m>0\text{ and }l\in\R\,.
\end{equation}
Now, let $m>0$, $l,\Omega\in\R$, and note that 
\[
E_{\Omega} (u) = E(u) - \Omega l, \quad\text{for all $u\in \mathcal C_{m,l}$.}
\]
Since $\mathcal C_{m,l}\subset \mathcal S_m$, we find that
\[
\inf_{u \in \mathcal C_{m, l}} E_\Omega(u) = \inf_{u \in \mathcal C_{m, l}} E(u) - \Omega l \ge \inf_{u \in \mathcal S_m} E_\Omega(u)\,,
\]
and hence $e(m,l)  \ge e_\Omega(m) +  \Omega l$.
Combining this with \eqref{eq:emlsymm} yields the identity \eqref{eq:e_symm}.

Now we turn to (ii), and let $\phi_\Omega \in \mathcal S_m$ be such that
$E_\Omega(\phi_\Omega) = e_\Omega(m)$. The existence of such a minimizer was proved in \cite{RS1, RS2}. Denoting
$L(\phi_\Omega) = l_ \Omega$ yields $\phi_\Omega\in \mathcal C_{m,l_\Omega}$. 
In addition, for any $u\in \mathcal C_{m,l_\Omega}\subset \mathcal S_m$,
\begin{equation*}
E(u)= E_\Omega (u) + \Omega L(u)=E_\Omega (u) + \Omega l_\Omega
\geq e_\Omega(m)+ \Omega l_\Omega = E(\phi_\Omega)\,,
\end{equation*}
which implies that $\phi_\Omega$ is also a minimizer of $E$ on $C_{m,l_\Omega}$, as claimed.
This in turn shows that
\[
e(m,l_\Omega)=E(\phi_\Omega)=E_\Omega(\phi_\Omega)+\Omega L(\phi_\Omega)=e_\Omega(m)
+\Omega l_\Omega\,.
\]
Combining this identity with \eqref{eq:e_symm} then implies that $\Omega l_\Omega\ge0$,
which completes the proof.
\end{proof}

The transformation $u\mapsto\tilde u$ and the associated relations between their energies,
in particular the fact that $E_\Omega(\tilde u)=E_{-\Omega}(u)$,
allows us to restrict, without loss of generality, the statement of Theorem~\ref{thm:connect} to $\Omega\ge 0$.

Proposition~\ref{prop:connect} is the basis for the following:
\begin{proof}[Proof of Theorem \ref{thm:connect}]
Let $\Omega>0$. From Proposition \ref{prop:connect}(i) we have the lower bound $e(m,l)-\Omega l \ge e_\Omega(m)$ for all $l\ge 0$. 
In addition, item (ii) shows that there exists $l_\Omega\ge 0$, such that
\[
e(m,l_\Omega)-\Omega l_\Omega =  e_\Omega(m) .
\]
Together these imply that 
\[
 e_\Omega(m) = \min_{l\ge 0} \big(e(m, l) - \Omega l \big),
 \]
 as claimed. Furthermore, Proposition \ref{prop:connect}(ii) also implies that 
 \[
 \mathcal G_{m, \Omega} \subset \bigcup_{l \in \mathcal L_\Omega} \mathcal B_{m,l}.
 \]
 Conversely, let $l_*\in \mathcal L_\Omega= \{ l\ge 0 \, : \,  e_\Omega(m) =  e(m, l) - \Omega l\}$ and $\phi \in \mathcal B_{m, l_*}$. Then
 \[
 E_\Omega(\phi) = E(\phi) - \Omega l_* = e(m, l_*) - \Omega l_* = e_\Omega(m),
 \]
 where the last identity follows from the definition of $\mathcal L_\Omega $. 
 In turn this implies that $\phi\in  \mathcal G_{m, \Omega}$, which completes the proof.
 \end{proof}

We close with one last observation. One of the strengths
of Theorem~\ref{thm:connect} is the fact that, for a given
$\Omega>0$, the definition of $\mathcal L_\Omega$ does not
require finding a specific minimizer of either $E_\Omega$ or $E$.
However, we also do not know the range of mean angular
momenta that emerge for minimizers of $E_\Omega$, i.e. we do not know
what the following set is:
\[
\bigcup_{\Omega\ge0} \mathcal L_\Omega\subset [0,\infty)\,.
\]
In other words, we cannot (yet) say which values $l\in\mathbb R$ appear
as  mean angular momenta of minimizers $\phi_\Omega$ of $E_\Omega$ in
$\mathcal S_m$.

Another way to look at this is the following: it is true
that given $m>0$ and $l\ge0$, we have
\[
e(m,l)\ge\sup_{\Omega\ge0} \big(e_\Omega(m)+\Omega l\big)\,.
\]
But we do not currently have a way to ensure
that there exists $l_\Omega$
such that $e(m,l_\Omega)=e_\Omega(m)+\Omega l_\Omega$, since
a minimizer $\phi_l\in\mathcal C_{m,l}$ of $E$ is a critical
point in $\mathcal S_m$, but not necessarily a minimizer,
of $E_{\Omega_l}$ (with  
$\Omega_l$ the Lagrange multiplier associated to $\phi_l$ and $E$).


\bibliographystyle{amsplain}

\end{document}